\documentclass[reqno,a4paper,11pt]{article}
\usepackage{amsmath,amsthm,dsfont,amsfonts,amssymb,fancyhdr}
\usepackage[usenames,dvipsnames]{color}
\usepackage{bbm,soul,supertabular,longtable,verbatim,extarrows}
\usepackage{titlesec}
\usepackage{graphicx}
\usepackage{enumitem}
\usepackage{tikz}
\usetikzlibrary{arrows,decorations.pathmorphing,backgrounds,positioning,fit,petri}

\usepackage{caption}

\usepackage{float}
\usepackage{BOONDOX-cal}
\usepackage{cite}
\usepackage{booktabs,multirow,makecell}
\usepackage{authblk}
\usepackage[titletoc]{appendix}
\usetikzlibrary[trees]
\usetikzlibrary{arrows.meta}
\usetikzlibrary{calc}
\usepackage{lipsum}
\usepackage{mathrsfs}
\usepackage{indentfirst}
\usepackage[colorlinks=true, allcolors=blue]{hyperref}
\usepackage[top=2.4cm,bottom=2.2cm,left=2.6cm,right=2cm]{geometry}

\newtheorem{thm}{Theorem}[section]
\newtheorem{lem}[thm]{Lemma}
\newtheorem{pro}[thm]{Proposition}
\newtheorem{cor}[thm]{Corollary}
\theoremstyle{definition}
\newtheorem{de}[thm]{Definition}
\theoremstyle{remark}

\DeclareMathOperator{\Spec}{Spec}
\newcommand{\OA}{\operatorname{OA}}
\newcommand{\AG}{\operatorname{AG}}
\newcommand{\PG}{\operatorname{PG}}

\begin{document}
	\title{\bf Co-edge-regular graphs with four eigenvalues and unbounded coherent rank}
	\author[a]{Edwin R. van Dam}
	\author[a,b]{Hong-Jun Ge\footnote{Hong-Jun Ge is the corresponding author.}}
	\author[b,c]{Jack H. Koolen}
	\affil[a]{\footnotesize{Department of Econometrics and O.R., Tilburg University, Tilburg, the Netherlands}}
	\affil[b]{\footnotesize{School of Mathematical Sciences, University of Science and Technology of China, Hefei, Anhui, PR China}}
	\affil[c]{\footnotesize{Wen-Tsun Wu Key Laboratory of CAS, Hefei, Anhui, PR China}}

	\maketitle
	\pagestyle{plain}
	
	\newcommand\blfootnote[1]{%
		\begingroup
		\renewcommand\thefootnote{}\footnote{#1}%
		\addtocounter{footnote}{-1}%
		\endgroup}
\blfootnote{2020 Mathematics Subject Classification. Primary: 05C50; Secondary: 05E30, 05B15, 05B25.}
	\blfootnote{E-mail addresses:  {\tt Edwin.vanDam@tilburguniversity.edu} (E.R. van Dam)  {\tt gehj22@mail.ustc.edu.cn} (H.-J. Ge) {\tt koolen@ustc.edu.cn} (J.H. Koolen).}
	
\vspace{-11pt}

\begin{abstract}
In the regular three-eigenvalue setting, spectral complexity and coherent-algebraic complexity coincide: a connected regular graph has exactly three distinct eigenvalues if and only if it is strongly regular, its coherent rank is three.
Although examples of regular graphs with four distinct eigenvalues and coherent rank larger than four are known, 
it was unknown whether coherent rank is uniformly bounded among regular graphs with four distinct eigenvalues.
	We show that no such bound exists, even under the additional assumption of co-edge-regularity.
	For every prime power \(q\), we construct infinitely many co-edge-regular graphs with exactly four distinct eigenvalues, smallest eigenvalue \(-2q-1\), and coherent rank at least \(q+4\).
	Consequently, coherent rank is unbounded among co-edge-regular graphs with exactly four distinct eigenvalues.
\end{abstract}

\noindent\textbf{Keywords.} Coherent rank; graph spectra; co-edge-regular graphs; partial linear spaces; finite geometry; WQH-switching.

\section{Introduction}

Strongly regular graphs are among the central objects of algebraic graph theory; see, for example, \cite{BrVM2022}.  They lie at the intersection of finite geometry, design theory, association schemes, and spectral graph theory.  One of their most useful characterizations is spectral: a connected regular graph is strongly regular if and only if it has exactly three distinct eigenvalues. 
The coherent rank of a graph is the dimension of the coherent algebra generated by its adjacency matrix.
  From this coherent-algebraic point of view, 
  a strongly regular graph has coherent rank three.
 Thus, in the regular three-eigenvalue setting, the two natural measures of complexity---the number of distinct eigenvalues and the coherent rank---coincide in the strongest possible way.

This paper asks how far this agreement can persist beyond the strongly regular case. 
Since the adjacency algebra of a graph has dimension equal to the number of distinct eigenvalues and is contained in the coherent closure, coherent rank is always at least the number of distinct eigenvalues. 
The first regular case in which equality is no longer forced is therefore the four-eigenvalue case. 
The four-eigenvalue case is also natural from the viewpoint of association schemes: relation graphs of symmetric three-class association schemes have at most four distinct eigenvalues, and in the genuine four-eigenvalue case their coherent rank is four; see \cite{van99,vs98}. 
Thus association schemes provide a canonical rank-four model in the four-eigenvalue setting.
The question addressed here is whether coherent rank can still be uniformly bounded under strong regularity-type assumptions.
The assumption we impose is co-edge-regularity:  a regular graph is \emph{co-edge-regular} if every pair of distinct non-adjacent vertices has the same number of common neighbours.

 This co-edge-regularity assumption is natural from the viewpoint of distance-regular graphs. 
 Terwilliger showed in his lecture notes that the local graphs of thin \(Q\)-polynomial distance-regular graphs are co-edge-regular and have at most five distinct eigenvalues; see \cite{Ternotes}. 
 Thus co-edge-regular graphs with few distinct eigenvalues, and in particular with four eigenvalues, form a natural local model in the study of \(Q\)-polynomial distance-regular graphs; see also \cite{GCK2021}.

  Our main result shows that even this co-edge-regular four-eigenvalue setting admits no uniform bound on coherent rank.
  
\begin{thm}\label{thm:intro-main}
	For every integer \(q\ge2\) admitting an affine or projective plane of order \(q\), there exist infinitely many co-edge-regular graphs with exactly four distinct eigenvalues and smallest eigenvalue \(-2q-1\), for which the number of common neighbours of adjacent vertex pairs takes at least \(q+2\) distinct values. 
	In particular, these graphs have coherent rank at least \(q+4\).
\end{thm}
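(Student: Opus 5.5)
We will build the graphs as point graphs (or closely related graphs) of partial linear spaces coming from the plane of order \(q\), and then modify them by WQH-switching. Start from a graph \(\Gamma_0\) that is known to have four distinct eigenvalues and smallest eigenvalue \(-2q-1\). A natural choice is the collinearity graph of a partial linear space whose points lie on exactly \(2q+1\) lines, each line of a fixed size. For example, take the point–line structure obtained by pairing an affine (or projective) plane of order \(q\) with an auxiliary structure that can be chosen from an infinite family. The adjacency matrix is then \(A=NN^\top-(2q+1)I\), where \(N\) is the point–line incidence matrix. Eigenvalue \(-2q-1\) therefore appears automatically, and the remaining spectrum comes from \(N^\top N\).

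We choose the ingredients so that \(N^\top N\) has only three distinct eigenvalues, one of them being the valency term. This gives exactly four eigenvalues for \(A\). The first step is to verify regularity, co-edge-regularity and the four-eigenvalue property for \(\Gamma_0\). For co-edge-regularity, we check that two non-collinear points always have the same number of common neighbours; this follows from the axioms of the plane, since every pair of non-collinear points meets each parallel class or line pencil in a controlled way. The infinitude comes from a free parameter in the auxiliary structure, for instance its dimension or the number of copies used.

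The second step is to apply WQH-switching (Wang–Qiu–Hu switching). This switching preserves the spectrum and, with a suitable choice of switching set, also preserves regularity and the number of common neighbours of non-adjacent pairs. Concretely, we pick a vertex set \(C=C_1\cup C_2\) with \(|C_1|=|C_2|\) such that every vertex outside \(C\) has either equally many neighbours in \(C_1\) and \(C_2\), or all its neighbours in \(C\) lying in one part. Such a set can be found among the lines through a point in the plane, or through lines of a parallel class. We then verify co-edge-regularity of the switched graph \(\Gamma\) directly: non-adjacent pairs inside or outside \(C\) keep their common-neighbour count, because the count changes only through \(C\), and the balance condition forces a net change of zero.

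The third step is to compute \(\lambda(x,y)\), the number of common neighbours, for adjacent pairs in \(\Gamma\). After switching, an edge \(xy\) can have its number of common neighbours shifted by an amount \(i\) depending on how many of the \(q+1\) (or \(q\)) relevant lines through \(x\) meet \(C\). We aim to choose \(C\) so that \(i\) ranges over \(q+2\) distinct values \(0,1,\dots,q+1\), which may require arranging the switching set along a pencil of lines. Finally, every coherent configuration containing \(A\) must separate the diagonal, the non-edges, and the edge classes distinguished by \(\lambda\). The last of these holds because \((A^2\circ A)\) lies in the coherent closure and its entries on edges take at least \(q+2\) values. This gives rank at least \(1+1+(q+2)=q+4\).

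The main obstacle is the second and third steps together. We need a switching set that preserves co-edge-regularity, which requires a global balance condition, while still generating \(q+2\) distinct \(\lambda\)-values, which requires local imbalance on edges. We will first try taking \(C\) to be a union of lines through a fixed point, split by a parallel class, and compute the common-neighbour counts explicitly.
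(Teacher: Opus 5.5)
There is a genuine gap: both substantive steps are left open, and your one concrete claim about switching is false.

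First, you never fix the starting graph. ``Pairing a plane of order $q$ with an auxiliary structure'' is not a construction. Nothing you write shows that such a collinearity graph is co-edge-regular (this does not ``follow from the axioms of the plane''), or that infinitely many exist. The missing idea is to take a $(q,\beta)$-regular partial linear space $\mathcal S$ whose point graph $G$ is strongly regular with smallest eigenvalue $-q-1$, namely a net $\mathcal N(q+1,n)$ or a regular semiplane $\mathcal D(q+1,n)$. You then start from the $2$-clique extension $G^{[2]}$, which by \eqref{cliext} has exactly four eigenvalues, the smallest being $2(-q)-1=-2q-1$.

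The two layers are essential. A spectrum-preserving switching of a strongly regular graph just gives another strongly regular graph (Lemmas~\ref{regular} and~\ref{srg}), so the new $\lambda$-values have to arise on cross-layer pairs between the untwisted and the twisted copy. Moreover, $\mathcal S$ must contain the plane $\Pi$ as a subspace, because the switching sets are traces such as $(\ell_{i,r}\setminus\ell_{\widetilde i,r})^-$ of lifted lines. That such $\mathcal S$ exist for all large admissible $n$ is a deep embedding theorem (\cite{BKOT2019}, \cite{DLEL2016}), and this is the actual source of ``infinitely many''.

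Second, WQH-switching preserves the spectrum, hence regularity, but not co-edge-regularity, and your ``net change of zero'' argument fails. Write $W_a,W_b$ for the vertices of types (a) and (b) in (P3). Take $x\in V_1$ and $y\in W$ of type (c) not adjacent to $x$. After switching, $x$ has lost $W_a$ and gained $W_b$ as neighbours, so $\mu(x,y)$ changes by $|N(y)\cap W_b|-|N(y)\cap W_a|$, which (P1)--(P3) do not control. In addition, pairs such as $x\in V_1$, $w\in W_a$ change from adjacent to non-adjacent. The paper checks co-edge-regularity structurally instead:
each twisted layer is again a net or semiplane (Lemmas~\ref{lem:latin-negative-layer} and~\ref{lem:steiner-negative-layer});
outside vertices have equal closed neighbourhoods in both layers (Lemma~\ref{lem:basic-structure}(iii));
and Proposition~\ref{prop:same-layer} gives $\mu=2\mu_G$.

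Third, the hope that a single switching set shifts $\lambda$ through $0,1,\dots,q+1$ has no mechanism behind it. The paper performs $\lfloor q/2\rfloor(\lfloor q/2\rfloor+1)/2$ successive switchings, namely the twists indexed by $T_{i,i}$. It then reads off new values on the pairs $(u_t^+,u_t^-)$, $(u_t^+,u_{\widetilde t}^-)$, $(u_1^+,w^-)$ and $(y^+,y^-)$, whose sizes are governed by the number of line labels the two vertices share, up to an $O(q^2)$ error. Separating these $q+1$ new values from each other and from $\{2\lambda_G(x,y)+2\}$ requires the size hypothesis $\beta\ge|P_\Pi|+(q+1)^2$. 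Your final step, coherent rank at least $\Lambda+2$, is correct and is Lemma~\ref{lem:coherent-rank-lambda-level}.
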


Consequently,  since  affine and projective planes exist for every prime power \(q\), coherent rank is unbounded among co-edge-regular  graphs with exactly four distinct eigenvalues.
Thus, even among graphs that are only one eigenvalue beyond the strongly regular case and that retain strong regularity-type conditions, coherent rank need not be bounded.

The construction has a simple geometric idea.
 We start from a regular partial linear space containing an affine or projective plane as a subspace, and we take two copies of its point set. In one layer, selected lifted lines are modified by exchanging their traces on the embedded plane, while the parts outside the plane are left unchanged. Each elementary exchange is an instance of Wang--Qiu--Hu switching~\cite{WQH2019}, and hence preserves the spectrum. The resulting graph is therefore cospectral with the corresponding untwisted two-layer graph, namely the \(2\)-clique extension of the original point graph.

Although the spectrum is preserved, the coherent structure changes substantially. The accumulated twists create many distinct common-neighbour counts on adjacent vertex pairs. These counts are detected by the coherent closure, and hence force the coherent rank to grow. In the abstract form of the construction, whenever the line size is sufficiently large relative to \(q\), the final twisted graph has at least \(q+1\) more distinct adjacent-pair common-neighbour counts than the original point graph. When the original point graph is strongly regular, this yields coherent rank at least \(q+4\).

We realize this construction in two classical design-theoretic settings.  
Nets containing affine subplanes yield twisted Latin square graphs, while regular semiplanes, equivalently duals of Steiner systems, containing projective subplanes yield twisted Steiner graphs. 
For each fixed \(q\) satisfying the corresponding plane-existence assumption, and for all sufficiently large admissible parameters, the resulting graphs have the properties stated in Theorem~\ref{thm:intro-main}.

The question of coherent rank under spectral restrictions has also been considered outside the regular setting.  Greaves and Yip~\cite{gy24} studied connected graphs with exactly three distinct eigenvalues and asked whether their coherent ranks can be unbounded.  Our result is complementary: we remain in a highly regular setting, but move from three to four eigenvalues.  
Thus four eigenvalues already suffice to separate spectral simplicity from bounded coherent rank, even under co-edge-regularity.

 The paper is organized as follows.  Section~\ref{sec:preli} collects the
 necessary graph-theoretic, coherent-algebraic, switching, and
 finite-geometric preliminaries.  Section~\ref{sec:twisted} introduces the
 twisted partial linear graphs.  Section~\ref{sec:tpl-properties} proves
 their structural and spectral properties and derives the coherent-rank lower bound from adjacent-pair common-neighbour counts.
   Section~\ref{sec:applications}
 applies the construction to nets and regular semiplanes, yielding the
 twisted Latin square and twisted Steiner graph families.  Section~\ref{sec:conclusion}
 gives concluding remarks. 
 
\section{Preliminaries}\label{sec:preli}

\subsection{Graphs}
Let $G$ be a simple graph with vertex set $V(G)$, edge set $E(G)$, and adjacency matrix $A(G)$.  Two graphs are \emph{cospectral} if their adjacency matrices have the same spectrum.  For $x\in V(G)$, write $N_G(x)=\{y\in V(G):x\sim y\}$.  For adjacent vertices $x,y$, put $\lambda_G(x,y)=|N_G(x)\cap N_G(y)|$ and define the \emph{$\lambda$-level} of $G$ by $\Lambda(G)=\#\{\lambda_G(x,y):x\sim y\}$.  For non-adjacent vertices $x,y$, put $\mu_G(x,y)=|N_G(x)\cap N_G(y)|$.  We suppress the subscript $G$ whenever the graph is clear.

A graph is \emph{sesqui-regular} with parameters $(v,k,\mu)$ if it is $k$-regular of order $v$ and every two non-adjacent vertices at distance two have exactly $\mu$ common neighbours.  It is \emph{co-edge-regular} with parameters $(v,k,\mu)$ if it is sesqui-regular with parameters $(v,k,\mu)$ and has diameter two.  A graph is \emph{strongly regular} with parameters $(v,k,\lambda,\mu)$ if it is co-edge-regular with parameters $(v,k,\mu)$ and $\lambda_G(x,y)=\lambda$ for every edge $xy$.
Equivalently, in the terminology above, strongly regular graphs are precisely co-edge-regular graphs whose $\lambda$-level is $1$.

We shall use the following standard facts.

\begin{lem}[{cf. \cite[Proposition 2]{vdh2003}}]\label{regular}
Let $\Gamma_1$ be a connected $k$-regular graph.  If $\Gamma_2$ is cospectral with $\Gamma_1$, then $\Gamma_2$ is also $k$-regular.
\end{lem}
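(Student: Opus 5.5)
The plan is to use the two spectral invariants that detect regularity: the number of vertices, and the number of edges together with the largest eigenvalue. Let $A_i=A(\Gamma_i)$ and let $n$ be the common order.

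First I record what cospectrality gives. The traces agree: $\operatorname{tr}(A_2^2)=\operatorname{tr}(A_1^2)$, that is, $2|E(\Gamma_2)|=2|E(\Gamma_1)|=nk$. So the average degree of $\Gamma_2$ is $\bar d=k$. The spectral radii also agree, so $\rho(A_2)=\rho(A_1)=k$; this uses that the largest eigenvalue of a $k$-regular graph is $k$.

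Next I compare the spectral radius with the average degree. The Rayleigh quotient with the all-ones vector $\mathbf 1$ gives
\[
k=\rho(A_2)\ \ge\ \frac{\mathbf 1^{\top}A_2\mathbf 1}{\mathbf 1^{\top}\mathbf 1}=\frac{2|E(\Gamma_2)|}{n}=k .
\]
Equality therefore holds in the Rayleigh bound. Since $A_2$ is real symmetric, $\mathbf 1$ must then be an eigenvector for $\rho(A_2)=k$. This means $A_2\mathbf 1=k\mathbf 1$, so every row sum of $A_2$ equals $k$, and $\Gamma_2$ is $k$-regular.

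Connectedness of $\Gamma_1$ is not needed for this argument. It would only matter for the extra conclusion that $\Gamma_2$ is connected, which follows because $k$ is a simple eigenvalue of $A_1$, hence of $A_2$, and a $k$-regular graph has eigenvalue $k$ with multiplicity equal to its number of components.

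The only step needing care is the equality case of the Rayleigh quotient. It holds because, for a symmetric matrix, $x^{\top}Ax=\rho\,x^{\top}x$ with $\rho$ the largest eigenvalue forces $x$ into the top eigenspace. One sees this by expanding $x$ in an orthonormal eigenbasis: any component outside the top eigenspace strictly lowers the quotient.
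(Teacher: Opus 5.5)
Your proof is correct and follows the standard argument behind the cited result; the paper gives no proof of its own, only the reference to van Dam--Haemers. That argument is that the spectrum determines the number of vertices and, via $\operatorname{tr}(A^2)$, the number of edges, and that the largest eigenvalue is at least the average degree, with equality exactly for regular graphs. You are also right that connectedness of $\Gamma_1$ is not needed for the regularity conclusion itself.
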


\begin{lem}[{cf. \cite[Lemma 10.2.1]{god01}}]\label{srg}
A connected non-complete regular graph is strongly regular if and only if it has exactly three distinct eigenvalues.
\end{lem}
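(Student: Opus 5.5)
The statement to be proved is Lemma~\ref{srg}: a connected non-complete regular graph is strongly regular if and only if it has exactly three distinct eigenvalues. I will use the adjacency algebra $\mathcal A=\langle A\rangle$, whose dimension equals the number of distinct eigenvalues (since $A$ is symmetric, hence diagonalizable), together with the Perron--Frobenius fact that for a connected $k$-regular graph the eigenvalue $k$ is simple. Its eigenspace is spanned by the all-ones vector $\mathbf 1$, so $J$ is a polynomial in $A$ (Hoffman's theorem).

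\emph{Strongly regular $\Rightarrow$ three eigenvalues.} Let $\Gamma$ be strongly regular with parameters $(v,k,\lambda,\mu)$. Counting walks of length two gives
\[
A^2=kI+\lambda A+\mu(J-I-A).
\]
Restricted to $\mathbf 1^\perp$, where $J$ acts as $0$, every eigenvalue $\theta\neq k$ satisfies $\theta^2+(\mu-\lambda)\theta+(\mu-k)=0$. So there are at most two eigenvalues besides $k$, and at most three in total. Because $\Gamma$ has an edge, $A\neq 0$ and $\Gamma$ has at least two eigenvalues. A graph with exactly two distinct eigenvalues has $A^2$ a combination of $A$ and $I$. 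If such a graph is connected and regular, then every pair of vertices at distance $2$ would need zero common neighbours, so it has diameter $1$ and is complete. Since $\Gamma$ is non-complete, it has exactly three eigenvalues.

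\emph{Three eigenvalues $\Rightarrow$ strongly regular.} Suppose $\Gamma$ is connected, $k$-regular and non-complete, with eigenvalues $k,\theta,\tau$. Then $(A-\theta I)(A-\tau I)$ vanishes on $\mathbf 1^\perp$ and sends $\mathbf 1$ to $(k-\theta)(k-\tau)\mathbf 1$. Hence
\[
(A-\theta I)(A-\tau I)=\frac{(k-\theta)(k-\tau)}{v}\,J .
\]
Expanding, $A^2=cJ+(\theta+\tau)A-\theta\tau I$. Reading off entries gives:
\begin{itemize}
\item for non-adjacent distinct vertices $x,y$, the number of common neighbours is $c$;
\item for adjacent vertices, it is $c+\theta+\tau$;
\item the diagonal entries give $k=c-\theta\tau$, which is consistent with regularity.
\end{itemize}
So $\lambda$ and $\mu$ are constants. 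It remains to check diameter two, which the paper's definition of co-edge-regularity requires. Since $\Gamma$ has three distinct eigenvalues and is connected, its diameter is at most $2$. As it is non-complete, the diameter is exactly $2$, and $c=\mu>0$. Hence $\Gamma$ is strongly regular.

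The only point needing care is the bookkeeping against the paper's definitions. The definitions are phrased through sesqui- and co-edge-regularity, so I must explicitly verify diameter two in the converse and exclude the complete graph in the forward direction. The algebra itself is routine.
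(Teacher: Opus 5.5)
Your proof is correct and is essentially the standard argument behind the cited \cite[Lemma 10.2.1]{god01}; the paper itself gives no proof. As there, the converse follows because $k$ is simple, so $(A-\theta I)(A-\tau I)$ is a multiple of $J$ and hence $A^2\in\operatorname{span}\{I,J,A\}$, while the forward direction comes from the quadratic satisfied on $\mathbf 1^\perp$. Your extra check of diameter two, needed under the paper's co-edge-regularity-based definition, is sound; it can also be read off directly from $c=(k-\theta)(k-\tau)/v>0$.
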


\subsection{Coherent rank}
Let $A=A(G)$.  The \emph{coherent closure} $W(G)$ is the smallest coherent algebra containing $A$, that is, the smallest matrix algebra over $\mathbb C$ containing $I,J,A$ that is closed under transposition and entrywise multiplication.  The \emph{coherent rank} $\operatorname{cr}(G)$ of $G$ is the dimension of $W(G)$.

\begin{lem}\label{lem:coherent-rank-lambda-level}
If $G$ is a non-complete graph with $\lambda$-level $t$, then $\operatorname{cr}(G)\ge t+2$.
\end{lem}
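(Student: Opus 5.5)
The plan is to exhibit $t+2$ linearly independent matrices in $W(G)$. Write $A=A(G)$ and consider the matrix $A^2\circ A$ (Hadamard product), which lies in $W(G)$ because $W(G)$ is closed under ordinary and entrywise multiplication. Its $(x,y)$ entry is $\lambda_G(x,y)$ when $x\sim y$ and $0$ otherwise. Let $\lambda_1,\dots,\lambda_t$ be the distinct values taken by $\lambda_G$ on edges. For each $i$ let $A_i$ be the $0/1$ matrix supported on the edges $xy$ with $\lambda_G(x,y)=\lambda_i$.

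The first key step is to show $A_i\in W(G)$. Since $W(G)$ is closed under entrywise products, it contains every entrywise polynomial in $M=A^2\circ A$. The entries of $M$ lie in $\{0,\lambda_1,\dots,\lambda_t\}$, with $0$ exactly off the edge set. Multiplying the Lagrange interpolation polynomial that is $1$ at $\lambda_i$ and $0$ at the other $\lambda_j$ and at $0$ gives a polynomial $p_i$ with $p_i(0)=0$. Hence $p_i$ has no constant term, so no $J$ is needed, although $J\in W(G)$ anyway. Applying $p_i$ entrywise to $M$ yields exactly $A_i$. A small subtlety: if some $\lambda_i=0$, then $M$ does not separate those edges from non-edges. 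In that case set $A_i=A-\sum_{j\neq i}A_j$, which still lies in $W(G)$.

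The second step is the independence count. Consider $I$, the matrices $A_1,\dots,A_t$, and $J-I-A$. These are nonzero $0/1$ matrices with pairwise disjoint supports. $I$ is nonzero trivially. Each $A_i$ is nonzero by the definition of the values $\lambda_i$. $J-I-A$ is nonzero because $G$ is non-complete, so some pair of distinct vertices is non-adjacent. Disjointly supported nonzero matrices are linearly independent, so $\dim W(G)\ge t+2$.

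I expect no real obstacle. The only points needing care are the interpolation argument when $0$ occurs as a $\lambda$-value, handled above by subtraction, and the use of non-completeness to guarantee $J-I-A\neq 0$. An alternative route would cite the standard fact that the basis relations of a coherent configuration refine the edge/non-edge partition and are constant on $\lambda$-values. The direct argument above avoids needing that fact.
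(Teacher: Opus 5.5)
Your proof is correct and is essentially the paper's argument. The paper takes the Hadamard powers $M_r=A\circ(A^2)^{\circ r}$ for $0\le r\le t-1$, with $(A^2)^{\circ 0}=J$, and proves they are linearly independent by a Vandermonde argument; you instead invert that Vandermonde system by Lagrange interpolation to get the indicator matrices $A_i$ explicitly, then add $I$ and $J-I-A$ as the paper does. Using $J$ as the zeroth power lets the paper avoid treating $\lambda_i=0$ separately, but your subtraction $A_i=A-\sum_{j\ne i}A_j$ handles that case correctly.
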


\begin{proof}
Let $\lambda_1,\ldots,\lambda_t$ be the distinct values of $\lambda_G(x,y)$ on edges.  For $r=0,1,\ldots,t-1$, set $M_r=A\circ(A^2)^{\circ r}$, where $(A^2)^{\circ0}=J$.  Each $M_r$ lies in $W(G)$, and on an edge with $\lambda_G(x,y)=\lambda_s$ its entry is $\lambda_s^r$.  Thus any linear relation among $M_0,\ldots,M_{t-1}$ gives a polynomial of degree at most $t-1$ vanishing at the $t$ distinct numbers $\lambda_1,\ldots,\lambda_t$, so all coefficients are zero.  Hence these $t$ matrices are linearly independent.  Together with $I$ and the non-zero matrix $J-I-A$, whose supports are disjoint from the edge support, they give $t+2$ linearly independent matrices in $W(G)$.
\end{proof}

\subsection{Clique extensions}
For a positive integer $t$, the \emph{$t$-clique extension} $G^{[t]}$ is obtained from $G$ by replacing each vertex $x$ by a clique $ X$ of order $t$, and by joining $X$ completely to $Y$ whenever $x\sim y$ in $G$.  If $G$ has $v$ vertices, then $A(G^{[t]})=J_t\otimes(A(G)+I_v)-I_{tv}$.  Consequently, if $\Spec(G)=\{[\theta_0]^{m_0},[\theta_1]^{m_1},\ldots,[\theta_r]^{m_r}\}$, then
\begin{equation}\label{cliext}
\Spec(G^{[t]})=\{[t(\theta_0+1)-1]^{m_0},[t(\theta_1+1)-1]^{m_1},\ldots,[t(\theta_r+1)-1]^{m_r},[-1]^{(t-1)v}\}.
\end{equation}
Thus cospectral graphs have cospectral $t$-clique extensions.  Also, if $G$ is co-edge-regular with parameters $(v,k,\mu)$, then $G^{[t]}$ is co-edge-regular with parameters $(tv,tk+t-1,t\mu)$.

\begin{lem}\label{lam cl ext}
If $\Lambda(G)=\gamma$, then $\Lambda(G^{[t]})\le \gamma+\#\{d_G(x):x\in V(G)\}$.  In particular, if $G$ is regular, then $\Lambda(G^{[t]})\le \Lambda(G)+1$.
\end{lem}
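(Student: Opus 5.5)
The plan is to compute the common-neighbour counts in $G^{[t]}$ directly from the definition and sort the edges of $G^{[t]}$ into two types. Write the vertex set of $G^{[t]}$ as $\bigcup_x X$, with $|X|=t$ for every $x\in V(G)$. For a vertex $u\in X$, its neighbourhood is
\[
N(u)=(X\setminus\{u\})\cup\bigcup_{y\sim x}Y .
\]
Two vertices $u,u'$ of $G^{[t]}$ are adjacent exactly in two situations: either they lie in the same clique $X$, or they lie in cliques $X,Y$ with $x\sim y$ in $G$.

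\emph{Edges inside one clique.} Let $u,u'\in X$ be distinct. Their common neighbours are the remaining $t-2$ vertices of $X$, together with all vertices of $Y$ for every $y\in N_G(x)$. Hence
\[
\lambda(u,u')=t-2+t\,d_G(x).
\]
This depends only on $d_G(x)$, so these edges contribute at most $\#\{d_G(x):x\in V(G)\}$ distinct values. If $t=1$ there are no such edges, and the bound holds trivially.

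\emph{Edges between two cliques.} Let $u\in X$ and $w\in Y$ with $x\sim y$. The common neighbours are of three kinds:
\begin{itemize}
\item the $t-1$ vertices of $X\setminus\{u\}$, which are adjacent to $w$ because $x\sim y$;
\item the $t-1$ vertices of $Y\setminus\{w\}$, which are adjacent to $u$ for the same reason;
\item all vertices of $Z$ for each $z\in N_G(x)\cap N_G(y)$.
\end{itemize}
We need to rule out any other common neighbour. A vertex of $Z$ with $z\notin\{x,y\}$ is adjacent to both $u$ and $w$ exactly when $z$ is adjacent to both $x$ and $y$, so the list above is complete. Therefore
\[
\lambda(u,w)=2(t-1)+t\,\lambda_G(x,y).
\]
The map $\lambda\mapsto 2(t-1)+t\lambda$ is injective, so these edges give exactly as many distinct values as $G$ has, namely at most $\gamma$.

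Combining the two types gives $\Lambda(G^{[t]})\le \gamma+\#\{d_G(x):x\in V(G)\}$. If $G$ is regular, it has a single degree value, so the second term equals $1$ and $\Lambda(G^{[t]})\le\Lambda(G)+1$.

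There is no real obstacle here. The only point needing care is checking that the between-cliques count includes nothing beyond the three listed kinds of common neighbours, which is the check made above.
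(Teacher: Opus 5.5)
Your proposal is correct and follows essentially the same route as the paper: you split the edges of $G^{[t]}$ into within-clique edges, with $\lambda=t\,d_G(x)+t-2$, and between-clique edges, with $\lambda=t\lambda_G(x,y)+2(t-1)$. Your explicit check that no further common neighbours occur in the between-clique case only spells out a step the paper leaves implicit.
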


\begin{proof}
There are two types of edges in $G^{[t]}$.  If $x\sim y$ in $G$, then for $\widetilde x\in X$ and $\widetilde y\in Y$ we have $\lambda_{G^{[t]}}(\widetilde x,\widetilde y)=t\lambda_G(x,y)+2(t-1)$: the last term comes from $X\setminus\{\widetilde x\}$ and $ Y\setminus\{\widetilde y\}$.  If $\widetilde x\ne\widetilde x'$ lie in the same clique $ X$, then $\lambda_{G^{[t]}}(\widetilde x,\widetilde x')=td_G(x)+t-2$.  The assertion follows.
\end{proof}

\subsection{WQH-switching}\label{GM}
For $U\subseteq V(G)$ and $x\in V(G)$, write
\[
d_U(x)=|U\cap N_G(x)|.
\]
We shall use the following switching theorem of Wang, Qiu and Hu \cite{WQH2019}.

\begin{thm}[{cf. \cite[Theorem 3.5]{WQH2019}}]\label{thm:wqh-switching}
Let $\Gamma$ be a graph whose vertex set is partitioned as $V_1\cup V_2\cup W$.
Assume that the following conditions hold:
\begin{enumerate}[label=\textup{(P\arabic*)}]
\item $|V_1|=|V_2|$.
\item $d_{V_1}(u)-d_{V_2}(u)=d_{V_2}(v)-d_{V_1}(v)$ for every $u\in V_1$ and $v\in V_2$.
\item Each vertex $w\in W$ satisfies one of the following:
\begin{enumerate}[label=\textup{(\alph*)}]
\item $d_{V_1}(w)=|V_1|$ and $d_{V_2}(w)=0$;
\item $d_{V_1}(w)=0$ and $d_{V_2}(w)=|V_2|$;
\item $d_{V_1}(w)=d_{V_2}(w)$.
\end{enumerate}
\end{enumerate}
For each vertex $w\in W$ satisfying (P3)(a), delete the edges from $w$ to $V_1$ and join $w$ to every vertex of $V_2$.
For each vertex $w\in W$ satisfying (P3)(b), delete the edges from $w$ to $V_2$ and join $w$ to every vertex of $V_1$.
Keep all other adjacencies unchanged.
Then the resulting graph is cospectral with $\Gamma$.
\end{thm}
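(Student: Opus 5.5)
The plan is to exhibit an explicit orthogonal matrix $Q$ with $Q^{\top}A(\Gamma)Q=A(\Gamma')$, where $\Gamma'$ is the switched graph. Cospectrality is then immediate. The natural candidate is a Householder reflection supported on $V_1\cup V_2$. Let $m=|V_1|=|V_2|$, which uses (P1), and order the vertices as $V_1,V_2,W$. Define the vector $x=(\mathbf 1_{V_1},-\mathbf 1_{V_2},\mathbf 0_W)^{\top}$, so $x^{\top}x=2m$, and set
\[
Q=I-\tfrac{2}{x^{\top}x}\,xx^{\top}=I-\tfrac1m\,xx^{\top}.
\]
In block form $Q$ is $\begin{pmatrix} I-\frac1mJ & \frac1mJ\\ \frac1mJ & I-\frac1mJ\end{pmatrix}\oplus I_W$. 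Since it is a reflection, $Q$ is symmetric and orthogonal, and $Q^{2}=I$. The whole proof reduces to checking how $Q$ acts on each block of $A=A(\Gamma)$.

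First, the block $A_0$ of $A$ indexed by $V_1\cup V_2$. For $u\in V_1$, $(A_0x)_u=d_{V_1}(u)-d_{V_2}(u)$. For $v\in V_2$, $(A_0x)_v=d_{V_1}(v)-d_{V_2}(v)$. Condition (P2) says that $d_{V_1}(u)-d_{V_2}(u)=d_{V_2}(v)-d_{V_1}(v)$ for every choice of $u\in V_1$, $v\in V_2$. Fixing $v$ and varying $u$, and then vice versa, shows this quantity is a constant $c$. Hence $A_0x=cx$, i.e.\ $x$ (restricted to $V_1\cup V_2$) is an eigenvector of $A_0$. Then $QA_0Q=A_0$, because $A_0$ commutes with $xx^{\top}$. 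So adjacencies inside $V_1\cup V_2$ are unchanged, as the theorem requires. The $W\times W$ block is untouched because $Q$ is the identity there.

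Next, the mixed blocks. For $w\in W$, let $b_w$ be the column of $A$ indexed by $w$, restricted to $V_1\cup V_2$. The new column is $Qb_w=b_w-\frac1m(x^{\top}b_w)\,x$, and $x^{\top}b_w=d_{V_1}(w)-d_{V_2}(w)$. We treat the three cases of (P3) separately.
\begin{itemize}
\item In case (c), $x^{\top}b_w=0$, so the column is unchanged.
\item In case (a), $b_w=(\mathbf 1_{V_1},\mathbf 0_{V_2})$ and $x^{\top}b_w=m$. Then $Qb_w=b_w-x=(\mathbf 0_{V_1},\mathbf 1_{V_2})$: the edges from $w$ to $V_1$ are replaced by edges to all of $V_2$.
\item In case (b), $x^{\top}b_w=-m$ and $Qb_w=b_w+x=(\mathbf 1_{V_1},\mathbf 0_{V_2})$.
\end{itemize}
By symmetry of $Q$ and $A$, the corresponding rows transform the same way. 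Thus $QAQ$ is exactly the adjacency matrix of the switched graph, and it is in particular a $0/1$ matrix.

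I expect no step to be a genuine obstacle once $Q$ is recognised as a Householder reflection. The only point needing care is extracting from (P2) that the difference $d_{V_1}-d_{V_2}$ is a single constant with opposite signs on $V_1$ and $V_2$, since this is precisely what makes $x$ an eigenvector of $A_0$.
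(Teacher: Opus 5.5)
Your proof is correct and complete. Conjugating by the reflection $Q=I-\frac1m xx^{\top}$ fixes the $V_1\cup V_2$ block, because (P2) is exactly the statement that $x$ is an eigenvector of that block, and it transforms each column $b_w$ as the switching prescribes in cases (P3)(a)--(c); you only implicitly need $m\ge 1$, and the case $m=0$ is vacuous.

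The paper gives no proof of its own and simply cites Wang--Qiu--Hu. Your conjugation by $\begin{pmatrix} I-\frac1mJ & \frac1mJ\\ \frac1mJ & I-\frac1mJ\end{pmatrix}\oplus I$ is the standard regular-orthogonal-matrix argument behind that switching.
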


If Theorem~\ref{thm:wqh-switching} applies to $\Gamma$ with respect to $(V_1,V_2)$, we say that $\Gamma$ admits a \emph{WQH-switching} with respect to $(V_1,V_2)$.

\subsection{Partial linear spaces}

A \emph{partial linear space} is a pair $\mathcal S=(P,\mathcal L)$, where $P$ is a set of points and $\mathcal L$ is a set of subsets of $P$, called lines, such that every line contains at least two points and every two distinct points lie on at most one line.
A partial linear space $(P',\mathcal L')$ is a \emph{subspace} of $(P,\mathcal L)$ if $P'\subseteq P$ and
\[
\mathcal L'=\{L\cap P':L\in\mathcal L,\ |L\cap P'|\ge 2\}.
\]
The \emph{point graph} of $\mathcal S$ is the graph with vertex set $P$ in which two distinct points are adjacent if and only if they lie on a common line.

A partial linear space is \emph{$(s,t)$-regular} if every point is incident with $s+1$ lines and every line contains $t+1$ points.
	An \emph{affine plane of order \(q\)}, denoted by \(\AG_q\), is a \((q,q-1)\)-regular partial linear space in which for every point \(p\) and every line \(L\) with \(p\notin L\), there exists a unique line through \(p\) disjoint from \(L\).
	A \emph{projective plane of order \(q\)}, denoted by \(\PG_q\), is a \((q,q)\)-regular partial linear space in which any two distinct lines meet in exactly one point.
Throughout, an affine or projective plane of order \(q\) means an abstract finite plane of order \(q\), not necessarily Desarguesian.
By a \emph{finite plane of order $q$} we mean either $\AG_q$ or $\PG_q$.

We shall repeatedly use the following elementary consequence of regularity.

\begin{lem}\label{lem:trace-lift}
Let $\mathcal S=(P,\mathcal L)$ be a $(q,\beta)$-regular partial linear space containing the finite plane $\Pi=(P_\Pi,\mathcal G)$ of order $q$ as a subspace.
Then the following hold.
\begin{enumerate}[label=\textup{(\roman*)}]
\item For every line $g\in\mathcal G$, there is a unique line $L(g)\in\mathcal L$ such that
\[
L(g)\cap P_\Pi=g.
\]
\item No line of $\mathcal S$ meets $P_\Pi$ in exactly one point.
\item If $L\in\mathcal L$ and $L\cap P_\Pi\ne \varnothing$, then
there is a unique line $g\in \mathcal{G}$ such that $L\cap P_\Pi=g$.
\end{enumerate}
\end{lem}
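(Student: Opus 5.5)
The proof rests on one counting fact. Every point of $\mathcal S$ lies on exactly $q+1$ lines, because $\mathcal S$ is $(q,\beta)$-regular. Every point of $P_\Pi$ also lies on exactly $q+1$ lines of the plane $\Pi$, because both $\AG_q$ and $\PG_q$ have $q+1$ lines through each point. I will compare these two families of lines through a fixed point $p\in P_\Pi$, using the definition of subspace: $\mathcal G=\{L\cap P_\Pi: L\in\mathcal L,\ |L\cap P_\Pi|\ge 2\}$.

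Fix $p\in P_\Pi$. Consider the map $\phi_p$ that sends a line $L\in\mathcal L$ with $p\in L$ and $|L\cap P_\Pi|\ge2$ to $L\cap P_\Pi$. By the subspace definition, the image is a line of $\mathcal G$ through $p$.
\begin{itemize}
\item $\phi_p$ is injective. If two distinct lines $L\neq L'$ of $\mathcal S$ had the same trace, that trace would contain at least two points lying on both $L$ and $L'$, contradicting the partial linear space axiom.
\item $\phi_p$ is surjective onto the lines of $\mathcal G$ through $p$. Every $g\in\mathcal G$ arises as $L\cap P_\Pi$ for some $L\in\mathcal L$. If $p\in g$, then $p\in L$ and $|L\cap P_\Pi|=|g|\ge2$, so $L$ lies in the domain of $\phi_p$.
\end{itemize}
So the number of lines of $\mathcal S$ through $p$ whose trace has at least two points equals the number of lines of $\Pi$ through $p$, namely $q+1$. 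Since only $q+1$ lines of $\mathcal S$ pass through $p$ in total, every line of $\mathcal S$ through $p$ meets $P_\Pi$ in at least two points.

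The three statements now follow.
\begin{itemize}
\item[(ii)] Suppose a line $L$ met $P_\Pi$ in exactly one point $p$. Then $L$ would be a line through $p$ with $|L\cap P_\Pi|=1$, which the count above excludes.
\item[(iii)] If $L\cap P_\Pi\neq\varnothing$, then by (ii) we have $|L\cap P_\Pi|\ge2$. By the subspace definition, $L\cap P_\Pi$ is then a line $g\in\mathcal G$. It is unique because it is just the set $L\cap P_\Pi$.
\item[(i)] Existence of $L(g)$ is the subspace definition. For uniqueness, two lines $L,L'$ with $L\cap P_\Pi=L'\cap P_\Pi=g$ would share the at least two points of $g$, so $L=L'$.
\end{itemize}

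The only step with any content is the counting argument, in particular the surjectivity of $\phi_p$. That step needs the precise form of the subspace definition, where $\mathcal G$ is exactly the set of traces of size at least $2$, together with the fact that the point degree of the plane, $q+1$, matches the point degree $s+1=q+1$ of $\mathcal S$. The rest is routine.
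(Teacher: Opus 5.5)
Your proposal is correct and follows essentially the same route as the paper: (i) comes from the subspace definition together with partial linearity. For (ii), the $q+1$ lifts of the plane lines through $p$ exhaust the $q+1$ lines of $\mathcal S$ through $p$, and (iii) follows from (ii) and the subspace definition. Your bijection $\phi_p$ simply makes the paper's ``unique lifts'' count explicit.
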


\begin{proof}
The existence in (i) follows from the definition of subspace.
Uniqueness follows from the partial linear property: two distinct lines of $\mathcal S$ cannot share two points of $g$.

Let $p\in P_\Pi$.
The $q+1$ lines of $\Pi$ through $p$ have unique lifts to $\mathcal S$ by (i).
Since $\mathcal S$ is $(q,\beta)$-regular, exactly $q+1$ lines of $\mathcal S$ pass through $p$.
Hence these lifted lines are all the lines of $\mathcal S$ through $p$.
This proves (ii).
Finally, if $L\cap P_\Pi$ is non-empty, then by (ii) and by the definition of subspace it is a line of $\Pi$.
This proves (iii).
\end{proof}

\section{Twisted partial linear graphs}\label{sec:twisted}
In the construction below, we shall use a $(q,\beta)$-regular partial linear space
\(\mathcal S=(P,\mathcal L)\) containing a finite plane
\(\Pi=(P_\Pi,\mathcal G)\) of order \(q\) as a subspace.
Write
\[
\alpha=\begin{cases}
	q-1,& ~\text{if} ~\Pi\cong \AG_q,\\
	q,&~\text{if} ~\Pi\cong \PG_q.
\end{cases}
\]
Thus $\Pi$ is $(q,\alpha)$-regular and each line of $\Pi$ has $\alpha+1$ points.

Fix a line $\ell_0\in\mathcal G$.
Let
\(
\{u_1,u_2,\ldots,u_q\}\subset\ell_0.
\)

For each $a\in [q]$, let
\(
\{\ell_0,\ell_{a,1},\ell_{a,2},\ldots,\ell_{a,q}\}
\)
be the set of the $q+1$ lines of $\Pi$ through $u_a$.
When $\Pi$ is affine, we choose the labels so that, for each fixed $b\in[q]$, the lines
\(\ell_{1,b},\ell_{2,b},\ldots,\ell_{q,b}\)
are pairwise parallel.

Let $L_0=L(\ell_0)$, and for $a,b\in[q]$ let
\(
L_{a,b}=L(\ell_{a,b})
\) 
be the unique lifts in $\mathcal S$ given by Lemma~\ref{lem:trace-lift}.
For $1\le a\le \lfloor q/2\rfloor$, set
\(
\widetilde a=q+1-a.
\)
For each $b\in[q]$, the pair $(L_{a,b},L_{\widetilde a,b})$ is called a \emph{swappable pair}.
Twisting this pair means replacing these two lines by
\begin{equation}\label{eq:twist-a}
(L_{a,b}\setminus \ell_{a,b})\cup \ell_{\widetilde a,b}
\end{equation}
and
\begin{equation}\label{eq:twist-b}
(L_{\widetilde a,b}\setminus \ell_{\widetilde a,b})\cup \ell_{a,b},
\end{equation}
respectively.
All other lines are kept fixed.
In other words, the two lines keep their parts outside $P_\Pi$ unchanged,
while their traces inside $P_\Pi$ are exchanged, see the following illustrations.

\begin{figure}[H]
	\centering
	\begin{tikzpicture}[scale=1, every node/.style={font=\small}]
		
		% ---------- parameters ----------
		\def\W{6}      % width of each panel
		\def\H{4}      % height
		\def\gap{1.2}  % horizontal gap between panels
		
		% P0_\Pi box (make it larger than before)
		\def\xL{1.6}
		\def\xR{4.4}
		\def\yB{0.8}
		\def\yT{3.2}
		
		% y-coordinates for the two lines
		\def\yblue{1.5}
		\def\yred{2.5}
		
		% left/right endpoints of lines
		\def\xA{0.6}
		\def\xB{5.4}
		
		% A helper to draw the frame and P0_\Pi
		\newcommand{\DrawFrame}{
			% P0 frame
			\draw[thick] (0,0) rectangle (\W,\H);
			\node[anchor=west] at (0.1,3.7) {$P$};
			
			% P0_\Pi region (larger)
			\draw[thick] (\xL,\yB) rectangle (\xR,\yT);
			\node at (\xL+0.35,\yT-0.3) {$P_\Pi$};
		}
		
		% ================= LEFT: BEFORE =================
		\begin{scope}[xshift=0cm]
			\DrawFrame
			\node[font=\small\bfseries] at (3,4.45) {Before};
			
			% ---- L_{i,j}^{(2)} : blue everywhere (3 segments) ----
			% left outside segment
			\draw[very thick,blue] (\xA,\yblue) -- (\xL,\yblue);
			% middle segment inside P0_\Pi
			\draw[very thick,blue] (\xL,\yblue) -- (\xR,\yblue);
			% right outside segment
			\draw[very thick,blue] (\xR,\yblue) -- (\xB,\yblue);
			
			\node[blue,anchor=west] at (\xB-0.7,\yblue-0.4) {$L_{a,b}$};
			\node[blue] at (3,\yblue-0.25) {$\ell_{a,b}$};
			%		\node[black] at (0.7,0.3) {$\LL_{i,j-1}^{(2)}$};
			
			% ---- L_{q+1-i,j}^{(2)} : red everywhere (3 segments) ----
			\draw[very thick,red] (\xA,\yred) -- (\xL,\yred);
			\draw[very thick,red] (\xL,\yred) -- (\xR,\yred);
			\draw[very thick,red] (\xR,\yred) -- (\xB,\yred);
			
			\node[red,anchor=west] at (\xB-0.9,\yred+0.4) {$L_{\widetilde{a},b}$};
			\node[red] at (3,\yred+0.25) {$\ell_{\widetilde{a},b}$};
			
			% optional: tiny note
			%	\node[font=\scriptsize] at (3,0.35) {segments: outside / inside $P_0_\Pi$ / outside};
		\end{scope}
		
		% ================= RIGHT: AFTER =================
		\begin{scope}[xshift=7.2 cm]
			\DrawFrame
			\node[font=\small\bfseries] at (3,4.45) {After};
			
			% Key idea:
			% - Outside segments keep their original colors.
			% - Middle segments (inside P0_\Pi) swap colors.
			
			% ---- Line at y = yblue (still named L_{i,j}^{(2)} in labels) ----
			% outside segments remain BLUE
			\draw[very thick,blue] (\xA,\yblue) -- (\xL,\yblue);
			\draw[very thick,blue] (\xR,\yblue) -- (\xB,\yblue);
			% middle segment becomes RED (swapped)
			\draw[very thick,red]  (\xL,\yblue) -- (\xR,\yblue);
			
			\node[blue,anchor=west] at (\xB-0.7,\yblue-0.4) {$L_{a,b}$};
			\node[red] at (3,\yblue-0.25) {$\ell_{a,b}$};
			
			% ---- Line at y = yred (still named L_{q+1-i,j}^{(2)} in labels) ----
			% outside segments remain RED
			\draw[very thick,red] (\xA,\yred) -- (\xL,\yred);
			\draw[very thick,red] (\xR,\yred) -- (\xB,\yred);
			% middle segment becomes BLUE (swapped)
			\draw[very thick,blue] (\xL,\yred) -- (\xR,\yred);
			
			\node[red,anchor=west] at (\xB-0.9,\yred+0.4) {$L_{\widetilde{a},b}$};
			\node[blue] at (3,\yred+0.25) {$\ell_{\widetilde{a},b}$};

			%			\node[black] at (0.7,0.3) {$\LL_{i,j}^{(2)}$};
			
			% A simple swap indicator inside P0_\Pi
			%	\draw[-{Latex[length=2.2mm]},thick]
			%	(3.0,2.55) .. controls (3.8,2.2) and (3.8,1.8) .. (3.0,1.45);
			%	\draw[-{Latex[length=2.2mm]},thick]
			%	(3.0,1.45) .. controls (2.2,1.8) and (2.2,2.2) .. (3.0,2.55);
			%	\node[font=\scriptsize] at (4.95,2.0) {swap inside $P_0_\Pi$};
			
		\end{scope}
		% ===== Arrow from left panel to right panel =====
		% Arrow from (right edge of left box) to (left edge of right box)
		\draw[->,very thick] (\W+0.1,2.0) -- (7.2-0.1,2.0);
		\node[font=\scriptsize] at (6.6,2.25) {\eqref{eq:twist-a}-\eqref{eq:twist-b}};
	\end{tikzpicture}
	\caption{Illustration of the replacements in \eqref{eq:twist-a}--\eqref{eq:twist-b} if $P_\Pi$ is an affine plane.}
	\label{fig:swap-segment-color}
\end{figure}

\begin{figure}[H]
	\centering
	\begin{tikzpicture}[scale=1, every node/.style={font=\small}]
		
		% ---------- parameters ----------
		\def\W{6}
		\def\H{4}
		
		% inner box P_\Pi
		\def\xL{1.6}
		\def\xR{4.4}
		\def\yB{0.8}
		\def\yT{3.2}
		
		% right panel shift
		\def\shiftR{7.2}
		
		% ---------- helper ----------
		\newcommand{\DrawFrame}{
			\draw[thick] (0,0) rectangle (\W,\H);
			\node[anchor=west] at (0.1,3.7) {$P$};
			\draw[thick] (\xL,\yB) rectangle (\xR,\yT);
			\node at (\xL+0.45,\yT-0.3) {$P_\Pi$};
		}
		
		% =========================================================
		% Coordinates for two crossing lines
		% Blue line:  (0.6,1.1) -> (5.4,2.9)
		% Red line:   (0.6,2.9) -> (5.4,1.1)
		%
		% Intersections with x = xL and x = xR:
		% blue: y = 1.475 at xL,  y = 2.525 at xR
		% red:  y = 2.525 at xL,  y = 1.475 at xR
		% =========================================================
		
		% ================= LEFT: BEFORE =================
		\begin{scope}[xshift=0cm]
			\DrawFrame
			\node[font=\small\bfseries] at (3,4.45) {Before};
			
			% blue line \ell_{a,b}
			\draw[very thick,blue] (0.6,1.1) -- (1.6,1.475);
			\draw[very thick,blue] (1.6,1.475) -- (4.4,2.525);
			\draw[very thick,blue] (4.4,2.525) -- (5.4,2.9);
			
			% red line \ell_{\tilde a,b}
			\draw[very thick,red] (0.6,2.9) -- (1.6,2.525);
			\draw[very thick,red] (1.6,2.525) -- (4.4,1.475);
			\draw[very thick,red] (4.4,1.475) -- (5.4,1.1);
			
			\node[blue,anchor=west] at (4.85,3.15) {$L_{a,b}$};
			\node[blue] at (3.75,2.75) {$\ell_{a,b}$};
			
			\node[red,anchor=west] at (4.55,0.85) {$L_{\widetilde{a},b}$};
			\node[red] at (3.75,1.25) {$\ell_{\widetilde{a},b}$};
		\end{scope}
		
		% ================= RIGHT: AFTER =================
		\begin{scope}[xshift=\shiftR cm]
			\DrawFrame
			\node[font=\small\bfseries] at (3,4.45) {After};
			
			% blue geometric line: outside remains blue, inside swapped to red
			\draw[very thick,blue] (0.6,1.1) -- (1.6,1.475);
			\draw[very thick,red]  (1.6,1.475) -- (4.4,2.525);
			\draw[very thick,blue] (4.4,2.525) -- (5.4,2.9);
			
			% red geometric line: outside remains red, inside swapped to blue
			\draw[very thick,red]  (0.6,2.9) -- (1.6,2.525);
			\draw[very thick,blue] (1.6,2.525) -- (4.4,1.475);
			\draw[very thick,red]  (4.4,1.475) -- (5.4,1.1);
			
			\node[blue,anchor=west] at (4.85,3.15) {$L_{a,b}$};
			\node[red] at (3.75,2.75) {$\ell_{a,b}$};
			
			\node[red,anchor=west] at (4.55,0.85) {$L_{\widetilde{a},b}$};
			\node[blue] at (3.75,1.25) {$\ell_{\widetilde{a},b}$};
		\end{scope}
		
		% ================= ARROW =================
		\draw[->,very thick] (\W+0.1,2.0) -- (\shiftR-0.1,2.0);
		\node[font=\scriptsize] at (6.6,2.25) {\eqref{eq:twist-a}--\eqref{eq:twist-b}};
		
	\end{tikzpicture}
	\caption{Illustration of the replacements in \eqref{eq:twist-a}--\eqref{eq:twist-b} if $P_\Pi$ is a projective plane.}
	\label{fig:swap-segment-color pp}
\end{figure}

For $1\le i\le \lfloor q/2\rfloor$ and $0\le j\le i$, define
\[
T_{i,j}:=
\{(a,b):1\le a<i,\ 1\le b\le a\}
\cup
\{(i,b):1\le b\le j\}.
\]
Also put $T_{0,0}=\varnothing$.
Starting from $\mathcal L$, twist every swappable pair indexed by $T_{i,j}$.
The resulting line set is denoted by $\mathcal L^-_{i,j}$.
For convenience, set $\mathcal L^-_{0,0}=\mathcal L$.
The order of the twists is irrelevant, since different indices correspond to different pairs of line labels.

Let
\[
P^+=\{x^+:x\in P\},\qquad P^-=\{x^-:x\in P\}.
\]
The original line set on $P^+$ is denoted by $\mathcal L^+$.
For a line label $L\in\mathcal L$, let $L^+$ be its copy in the positive layer and let $L^-$ be its realization in the negative layer after the twists indexed by $T_{i,j}$.
Set
\[
\mathcal S^+=(P^+,\mathcal L^+),\qquad
\mathcal S^-_{i,j}=(P^-,\mathcal L^-_{i,j}).
\]
In particular, 
\[
\mathcal S^+\cong \mathcal{S}\cong\mathcal S^-_{0,0},\qquad
\mathcal S^-_{i+1,0}=\mathcal{S}_{i,i}^-.
\]

\begin{de}[Twisted partial linear graph]\label{def:tpl}
For $0\le j\le i\le \lfloor q/2\rfloor$, the \emph{twisted partial linear graph}
$\Gamma_{i,j}=\Gamma_{i,j}(q,\beta)$
is the graph with vertex set
$V(\Gamma_{i,j})=P^+\cup P^-.$
Two distinct vertices $x^\varepsilon$ and $y^\delta$, where $\varepsilon,\delta\in\{+,-\}$, are adjacent if and only if at least one of the following holds:
\begin{enumerate}[label=(\roman*)]
\item $x,y\in P_\Pi$;
\item there exists a line label $L\in\mathcal L$ such that
$x^\varepsilon\in L^\varepsilon$ and $ y^\delta\in L^\delta$.
\end{enumerate}
\end{de}

The graph $\Gamma_{0,0}$ is the untwisted graph.
It is exactly the $2$-clique extension of the point graph of $\mathcal S$.

\section{Properties of twisted partial linear graphs}\label{sec:tpl-properties}

Let $G$ be the point graph of $\mathcal S$.
In this section we establish the spectral and sesqui-regular properties of the twisted partial linear graphs $\Gamma_{i,j}$.

\subsection{The twisted layer}

\begin{pro}\label{prop:twisted-pls}
For $0\le j\le i\le \lfloor q/2\rfloor$, the incidence structure
\[
\mathcal S^-_{i,j}=(P^-,\mathcal L^-_{i,j})
\]
is a $(q,\beta)$-regular partial linear space containing $\Pi$ as a subspace.
\end{pro}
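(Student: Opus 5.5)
The plan is to argue by induction along the twisting sequence, so that it suffices to show that a single twist preserves the property. Suppose $\mathcal S'=(P,\mathcal L')$ is a $(q,\beta)$-regular partial linear space containing $\Pi$ as a subspace, and that the lifts of the lines of $\Pi$ in $\mathcal S'$ are still labelled as in Lemma~\ref{lem:trace-lift}. Since distinct indices in $T_{i,j}$ give disjoint pairs of line labels, the twists commute. We may therefore apply them one pair at a time, starting from $\mathcal S^-_{0,0}=\mathcal S$.

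For one twist of the pair $(L_{a,b},L_{\widetilde a,b})$, write $O_a=L_{a,b}\setminus \ell_{a,b}$ and $O_{\widetilde a}=L_{\widetilde a,b}\setminus\ell_{\widetilde a,b}$. By Lemma~\ref{lem:trace-lift}(i), both sets lie in $P\setminus P_\Pi$. The new lines are $O_a\cup\ell_{\widetilde a,b}$ and $O_{\widetilde a}\cup \ell_{a,b}$, and these unions are disjoint.

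\emph{Regularity.} Each new line has size $|O_a|+(\alpha+1)=\beta+1$, since both traces have $\alpha+1$ points. For point degrees, note that a point of $P\setminus P_\Pi$ lies in $O_a$ (respectively $O_{\widetilde a}$) exactly when it lay on the old $L_{a,b}$ (respectively $L_{\widetilde a,b}$), so its incidence count is unchanged. A point $p\in P_\Pi$ lay on the old line $L_{a,b}$ iff $p\in\ell_{a,b}$, iff $p$ lies on the new line $O_{\widetilde a}\cup\ell_{a,b}$. Likewise for $\widetilde a$. So every point still lies on $q+1$ lines.

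\emph{Partial linearity.} This is the main check. Take two distinct points $x,y$; we must show they lie on at most one line of the new system. The only lines that changed are the two new ones, so we need to rule out a new line sharing two points with another line, and rule out the two new lines sharing two points. Split into cases by how many of $x,y$ lie in $P_\Pi$.
\begin{itemize}[label={}]
\item If both lie in $P_\Pi$, every line containing them meets $P_\Pi$ in a line of $\Pi$ by Lemma~\ref{lem:trace-lift}(iii), applied in the old space. The traces of lines on $P_\Pi$, as a multiset, are unchanged by the twist, and in the old space each line of $\Pi$ is the trace of a unique line. Since $\Pi$ is linear, at most one line contains both $x$ and $y$.
\item If neither lies in $P_\Pi$, the outside parts of all lines are unchanged as sets, so the old partial linearity gives the claim.
\item The mixed case, $x\in P_\Pi$ and $y\notin P_\Pi$, is the delicate one. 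Suppose $y\in O_a$, so $y$ lies on the new line $O_a\cup\ell_{\widetilde a,b}$. We must check that no other line through $y$ meets $\ell_{\widetilde a,b}$.
\end{itemize}

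The mixed case is where I expect the real obstacle: the naive argument fails, and some property of the chosen labels must be used. A point $y\in O_a$ may lie on an old line $M$ that meets $\ell_{\widetilde a,b}$. Indeed, $M$ meets $P_\Pi$ in some line $m$ of $\Pi$, and $m$ need not avoid $\ell_{\widetilde a,b}$. What we know from the old space is that $M$ and $L_{a,b}$ share only $y$, so $m\cap\ell_{a,b}=\varnothing$.
\begin{itemize}[label={}]
\item In the affine case, $m$ is then parallel to $\ell_{a,b}$. By the labelling convention, $\ell_{a,b}$ is parallel to $\ell_{\widetilde a,b}$, so $m\cap\ell_{\widetilde a,b}=\varnothing$, as required. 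This is exactly why the parallel labelling was imposed.
\item In the projective case, any two lines of $\Pi$ meet, so $m$ would meet $\ell_{a,b}$. Hence a line $M$ through $y\notin P_\Pi$ with $M\cap P_\Pi\neq\varnothing$ and $M\ne L_{a,b}$ cannot exist at all. So $y$ lies on only one line meeting $P_\Pi$, and the case is vacuous.
\end{itemize}
One must also make sure the argument covers lines $M$ that were themselves twisted earlier. This holds because the induction hypothesis is the full statement for the current space $\mathcal S'$, not for $\mathcal S$; in $\mathcal S'$, the lifts $L_{a,b}$ and $L_{\widetilde a,b}$ are still untwisted and have traces $\ell_{a,b}$ and $\ell_{\widetilde a,b}$. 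The symmetric case $y\in O_{\widetilde a}$ is handled the same way.

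\emph{Subspace.} Finally, the traces of the new lines on $P_\Pi$ are still exactly the lines of $\mathcal G$, each occurring once, and every line has trace of size $0$ or $\alpha+1$. Hence $\Pi$ is the induced subspace, which completes the induction.
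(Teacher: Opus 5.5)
Your argument is correct, and its skeleton is the same as the paper's: reduce to a single twist, then check line sizes, point degrees, and that every line of $\Pi$ is still the trace of exactly one line. The difference is in how you verify partial linearity. You argue over pairs of points, which produces the three-way case split and the delicate mixed case. The paper argues over pairs of line labels instead. The intersection of two lines is the disjoint union of two pieces: the intersection of their outside parts, which no twist touches, and the intersection of their traces. The size of the trace intersection is unchanged, because in $\PG_q$ any two traces meet in exactly one point, while in $\AG_q$ a trace is only ever replaced by a parallel one. Hence every intersection number $|L_1\cap L_2|$ is preserved exactly, and partial linearity follows with no case analysis. This exact invariant is also what is reused in Lemma~\ref{lem:basic-structure}(ii). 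Your point-based argument yields only ``at most one line'', which suffices here but would need supplementing later.

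One step needs tightening. In your affine mixed case, the inference ``$m$ is parallel to $\ell_{a,b}$, hence disjoint from $\ell_{\widetilde a,b}$'' requires $m\ne\ell_{\widetilde a,b}$. For an untouched line $M$ this follows from uniqueness of lifts. However, the case $y\in O_a\cap O_{\widetilde a}$ is not excluded by the hypotheses when $\Pi$ is affine. In that case the old line $L_{\widetilde a,b}$ passes through $y$ and does meet $\ell_{\widetilde a,b}$. You must note that in the new system this line has become $O_{\widetilde a}\cup\ell_{a,b}$, whose trace is disjoint from $\ell_{\widetilde a,b}$.
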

\begin{proof}
	It is enough to consider a single twist, since the required assertions then follow by iteration over the twists indexed by $T_{i,j}$.
	In such a twist, the two affected lines only exchange their traces in $P_\Pi$, while their parts outside $P_\Pi$ are left unchanged.
	Hence all line sizes and all point-degrees are preserved: outside points have exactly the same incidences, and points in $P_\Pi$ merely exchange one incident lift for another.
	The set of traces on $P_\Pi$ is unchanged, so $\Pi$ remains a subspace.
	It remains to see that the partial linear property is preserved.
	For any two line labels, their outside intersection is unchanged, and their trace-intersection size is also unchanged: in the projective case any two non-empty traces meet in one point, while in the affine case each replaced trace is replaced by a parallel one, so parallelism, and hence intersection size, is preserved.
	Thus the total intersection size of any two distinct lines is the same as before the twist, and is therefore at most one.
\end{proof}

\subsection{Basic structural facts}

\begin{lem}\label{lem:basic-structure}
For $0\le j\le i\le \lfloor q/2\rfloor$, the following hold in $\Gamma_{i,j}$.
\begin{enumerate}[label=\textup{(\roman*)}]
\item The induced subgraph on $P_\Pi^+\cup P_\Pi^-$ is complete.
\item For every line label $L\in\mathcal L$,
\[
L^-\setminus P_\Pi^-=\{x^-:x\in L\setminus P_\Pi\},
\qquad
|L^-\cap P_\Pi^-|=|L\cap P_\Pi|.
\]
Moreover, for any two line labels $L_1,L_2\in\mathcal L$,
\[
|L_1^-\cap L_2^-|=|L_1\cap L_2|,
\qquad
|L_1^-\cap L_2^-\cap P_\Pi^-|=|L_1\cap L_2\cap P_\Pi|.
\]
\item If $x\in P\setminus P_\Pi$, then
\[
N_{\Gamma_{i,j}}(x^+)\cup\{x^+\}
=
N_{\Gamma_{i,j}}(x^-)\cup\{x^-\}.
\]
In particular,
\[
\bigl|N_{\Gamma_{i,j}}(x^+)\cup\{x^+\}\bigr|=2(q+1)\beta+2.
\]
\end{enumerate}
\end{lem}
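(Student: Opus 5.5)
The plan is to prove the three parts in order, working directly from Definition~\ref{def:tpl} together with the description of a twist in \eqref{eq:twist-a}--\eqref{eq:twist-b}.

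\textbf{Part (i).} This is immediate from clause (i) of Definition~\ref{def:tpl}. Any two distinct vertices $x^\varepsilon,y^\delta$ with $x,y\in P_\Pi$ are adjacent. This also covers the case $x=y$, $\varepsilon\ne\delta$.

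\textbf{Part (ii).} I will argue one twist at a time, as in the proof of Proposition~\ref{prop:twisted-pls}.

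\begin{itemize}[label=$\cdot$]
\item A twist of a swappable pair $(L_{a,b},L_{\widetilde a,b})$ leaves each line's part outside $P_\Pi$ unchanged. It exchanges the traces $\ell_{a,b}$ and $\ell_{\widetilde a,b}$, and both have $\alpha+1$ points. Lines not in the pair are untouched. By induction over $T_{i,j}$, $L^-\setminus P_\Pi^-$ is the copy of $L\setminus P_\Pi$, and $|L^-\cap P_\Pi^-|=|L\cap P_\Pi|$. Here I use Lemma~\ref{lem:trace-lift}(iii): every trace is either empty or a full line of $\Pi$.
\item For the intersection counts, the outside parts of $L_1^-\cap L_2^-$ coincide with those of $L_1\cap L_2$. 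So it suffices to compare the trace intersections.
\item In the projective case, any two nonempty traces (distinct lines of $\Pi$) meet in exactly one point. The number of distinct labels with nonempty traces is preserved, and distinct labels keep distinct traces because a twist permutes traces bijectively. An empty trace stays empty.
\item In the affine case, the traces $\ell_{a,b}$ and $\ell_{\widetilde a,b}$ are parallel. The map sending each label's original trace to its current trace therefore preserves parallel classes and is a bijection on $\mathcal G$. Two distinct lines of $\Pi$ meet in $1$ or $0$ points according to whether they are non-parallel or parallel. Hence $|L_1^-\cap L_2^-\cap P_\Pi^-|=|L_1\cap L_2\cap P_\Pi|$.
\item Adding the outside and trace parts gives the total equality.
\end{itemize}

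\textbf{Part (iii).} Let $x\in P\setminus P_\Pi$. Since $x\notin P_\Pi$, clause (i) of the definition never applies to edges at $x^\pm$. Thus $y^\delta\in N(x^\varepsilon)\cup\{x^\varepsilon\}$ exactly when there is a label $L$ with $x^\varepsilon\in L^\varepsilon$ and $y^\delta\in L^\delta$; the case $y^\delta=x^\varepsilon$ is included since $x$ lies on some line.

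\begin{itemize}[label=$\cdot$]
\item By (ii), $x^+\in L^+$ if and only if $x\in L$ if and only if $x^-\in L^-$. So both closed neighbourhoods equal $\bigcup_{L\ni x}(L^+\cup L^-)$.
\item To count, note that $x$ lies on $q+1$ lines of $\mathcal S$, which pairwise meet only in $x$. The union of the $L^+$ therefore has $(q+1)\beta+1$ points.
\item Likewise, the $L^-$ with $x\in L$ pairwise meet only in $x^-$, by the intersection-size equality in (ii). Their union also has $(q+1)\beta+1$ points.
\item The two layers are disjoint, giving $2(q+1)\beta+2$.
\end{itemize}

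\textbf{Main obstacle.} The step needing the most care is the affine case of (ii). There I must check that the cumulative relabelling of traces is a bijection of $\mathcal G$ preserving parallel classes. This holds because each swap exchanges two lines in the same parallel class $\{\ell_{1,b},\dots,\ell_{q,b}\}$, by the labelling convention.
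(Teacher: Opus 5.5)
Your proposal is correct and follows essentially the same route as the paper. Part (i) comes straight from Definition~\ref{def:tpl}; part (ii) uses that twists leave outside parts untouched and permute traces either within parallel classes (affine case) or among lines that pairwise meet anyway (projective case); part (iii) identifies both closed neighbourhoods with $\bigcup_{L\ni x}(L^+\cup L^-)$ and counts layer by layer. Your remark that the cumulative trace map is a parallel-class-preserving bijection of $\mathcal G$ just makes precise the paper's phrase that parallelism is preserved.
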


\begin{proof}
Part (i) is immediate from Definition~\ref{def:tpl}.

For (ii), the outside part of each line label is never changed by a twist, and every non-empty trace is merely replaced by another line of $\Pi$ of the same size.
It remains only to compare intersections of traces.
If $\Pi$ is projective, any two non-empty traces meet in exactly one point before and after the twists, while empty traces remain empty.
If $\Pi$ is affine, a twist replaces a trace by a parallel trace; hence parallelism, and therefore the intersection number $0$ or $1$, is preserved.

For (iii), let $X_1,\dots,X_{q+1}$ be the line labels of $\mathcal S$ through $x$.
Since $x\notin P_\Pi$, part (ii) gives
\[
x^+\in X_r^+\quad\text{and}\quad x^-\in X_r^-
\qquad (1\le r\le q+1).
\]
Therefore, by Definition~\ref{def:tpl},
\[
N_{\Gamma_{i,j}}(x^+)\cup\{x^+\}
=
\bigcup_{\sigma\in\{+,-\}}\bigcup_{r=1}^{q+1}X_r^\sigma
=
N_{\Gamma_{i,j}}(x^-)\cup\{x^-\}.
\]
In each layer the $q+1$ lines through $x^\sigma$ meet pairwise only in $x^\sigma$, and each has size $\beta+1$.
Thus the displayed set has size
$2\bigl(1+(q+1)\beta\bigr)=2(q+1)\beta+2.$
\end{proof}

\begin{lem}\label{lem:regular-degree}
For $0\le j\le i\le \lfloor q/2\rfloor$, the graph $\Gamma_{i,j}$ is regular of degree
$2(q+1)\beta+1.$
\end{lem}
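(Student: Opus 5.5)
We must show that every vertex of \(\Gamma_{i,j}\) has degree \(2(q+1)\beta+1\). We split the vertices according to whether the underlying point lies outside or inside the plane.

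\textbf{Vertices outside the plane.} For \(x\in P\setminus P_\Pi\) there is nothing to do. Lemma~\ref{lem:basic-structure}(iii) gives \(|N(x^\pm)\cup\{x^\pm\}|=2(q+1)\beta+2\), so both \(x^+\) and \(x^-\) have degree \(2(q+1)\beta+1\).

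\textbf{Vertices in the plane.} Now let \(x\in P_\Pi\) and \(\varepsilon\in\{+,-\}\). We describe the closed neighbourhood of \(x^\varepsilon\) explicitly as
\[
N(x^\varepsilon)\cup\{x^\varepsilon\}
=\bigl(P_\Pi^+\cup P_\Pi^-\bigr)\cup\bigcup_{L:\,x^\varepsilon\in L^\varepsilon}\bigl((L^+\setminus P_\Pi^+)\cup(L^-\setminus P_\Pi^-)\bigr).
\]
Condition (i) of Definition~\ref{def:tpl} contributes all of \(P_\Pi^\pm\). Condition (ii) contributes, for each line label \(L\) whose realization in layer \(\varepsilon\) contains \(x^\varepsilon\), the points of \(L^+\) and of \(L^-\). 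Their parts inside \(P_\Pi\) are already counted, so only the outside parts matter. By Lemma~\ref{lem:basic-structure}(ii), \(L^-\setminus P_\Pi^-\) is just the copy of \(L\setminus P_\Pi\), and the same holds for \(L^+\).

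By Proposition~\ref{prop:twisted-pls} (or by the original space when \(\varepsilon=+\)), exactly \(q+1\) line labels \(L\) have \(x^\varepsilon\in L^\varepsilon\). Each such \(L\) meets \(P_\Pi\) in a line of \(\Pi\) with \(\alpha+1\) points, by Lemma~\ref{lem:trace-lift}(iii) together with Lemma~\ref{lem:basic-structure}(ii). Hence each outside part \(L\setminus P_\Pi\) has \(\beta-\alpha\) points.

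The key point is that these outside parts are pairwise disjoint for distinct labels. Suppose two labels shared an outside point. Their traces also meet: in the projective case any two traces meet, and in the affine case both traces pass through \(x\) in layer \(\varepsilon\), so they meet there too. Then the two labels would have intersection size at least \(2\) in the layer-\(\varepsilon\) space, contradicting the partial linear property from Proposition~\ref{prop:twisted-pls}.

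Disjointness gives
\[
|N(x^\varepsilon)\cup\{x^\varepsilon\}| = 2|P_\Pi|+2(q+1)(\beta-\alpha).
\]
It remains to check that \(|P_\Pi|=(q+1)\alpha+1\). This holds because the \(q+1\) lines of \(\Pi\) through a point partition the remaining points of \(\Pi\): this gives \(q^2\) points for \(\AG_q\) and \(q^2+q+1\) for \(\PG_q\). Substituting,
\[
2\bigl((q+1)\alpha+1\bigr)+2(q+1)(\beta-\alpha)=2(q+1)\beta+2,
\]
so the degree is \(2(q+1)\beta+1\) here as well.

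The main obstacle is the disjointness claim above. The twisting means the labels through \(x^-\) differ from those through \(x^+\), so the argument must work in the \(\varepsilon\)-layer's line set and use only that this layer is a partial linear space. This is legitimate because the outside parts in both layers are indexed by the same labels.
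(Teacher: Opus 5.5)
Your proof is correct and follows the same route as the paper. The outside case comes from Lemma~\ref{lem:basic-structure}(iii), and for $x\in P_\Pi$ the count is $2|P_\Pi|-1+2(q+1)(\beta-\alpha)$: the clique on $P_\Pi^+\cup P_\Pi^-$ plus the outside parts of the $q+1$ lines through $x^\varepsilon$. Beyond the paper, you justify the pairwise disjointness of those outside parts via partial linearity of the $\varepsilon$-layer (the paper leaves this implicit), and you use the uniform identity $|P_\Pi|=(q+1)\alpha+1$ where the paper checks the affine and projective cases separately.
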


\begin{proof}
If $x\in P\setminus P_\Pi$, the degree follows immediately from Lemma~\ref{lem:basic-structure}~(iii).
Now let $x\in P_\Pi$.
The vertex $x^\varepsilon$ is adjacent to every vertex of $P_\Pi^+\cup P_\Pi^-$ except itself, giving $2|P_\Pi|-1$ neighbours.
For each of the $q+1$ plane lines through $x$, its lift contributes $\beta-\alpha$ vertices outside $P_\Pi$ in each layer.
Thus
\[
d_{\Gamma_{i,j}}(x^\varepsilon)=2|P_\Pi|-1+2(q+1)(\beta-\alpha).
\]
If $\Pi$ is affine, then $|P_\Pi|=q^2$ and $\alpha=q-1$; if $\Pi$ is projective, then $|P_\Pi|=q^2+q+1$ and $\alpha=q$.
In both cases the last expression equals $2(q+1)\beta+1$.
\end{proof}

\subsection{Cospectrality}

\begin{pro}\label{prop:wqh-step}
Fix $1\le i\le \lfloor q/2\rfloor$ and $0\le j<i$.
Then $\Gamma_{i,j+1}$ is obtained from $\Gamma_{i,j}$ by WQH-switching.
Consequently, $\Gamma_{i,j+1}$ and $\Gamma_{i,j}$ are cospectral.
\end{pro}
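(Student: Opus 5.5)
The plan is to apply Theorem~\ref{thm:wqh-switching} to $\Gamma_{i,j}$ with the two \emph{traces} of the pair that is twisted next as the switching sets. Write $a=i$ and $b=j+1$, so $\widetilde a=q+1-a\neq a$, and $T_{i,j+1}=T_{i,j}\cup\{(a,b)\}$. No twist indexed by $T_{i,j}$ involves the labels $L_{a,b}$ or $L_{\widetilde a,b}$. Hence in $\mathcal S^-_{i,j}$ the line $L_{a,b}^-$ still has trace $\ell_{a,b}^-$, and $L_{\widetilde a,b}^-$ still has trace $\ell_{\widetilde a,b}^-$. Put
\[
V_1=\ell_{a,b}^-,\qquad V_2=\ell_{\widetilde a,b}^-,\qquad W=V(\Gamma_{i,j})\setminus(V_1\cup V_2).
\]
These sets are disjoint, since the two lines of $\Pi$ meet only at a point outside $\ell_0$ or not at all. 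Condition (P1) holds because $|V_1|=|V_2|=\alpha+1$. For (P2), Lemma~\ref{lem:basic-structure}(i) says $V_1\cup V_2$ induces a clique. So for $u\in V_1$ we get $d_{V_1}(u)-d_{V_2}(u)=\alpha-(\alpha+1)=-1$, and symmetrically for $v\in V_2$.

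For (P3) I will split $W$ into three kinds of vertices.

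First, plane vertices $z^{\pm}$ with $z\in P_\Pi$ are adjacent to all of $V_1\cup V_2$ by Lemma~\ref{lem:basic-structure}(i), so (c) holds.

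Second, take outside vertices $x^{\pm}$ with $x\notin P_\Pi$. By Lemma~\ref{lem:basic-structure}(iii), $x^+$ and $x^-$ have the same neighbours outside $\{x^+,x^-\}$, so it suffices to treat $x^-$. Then $x^-\sim z^-$ for $z\in P_\Pi$ exactly when some line of $\mathcal S^-_{i,j}$ through $x^-$ has a trace containing $z$. By Proposition~\ref{prop:twisted-pls}, distinct lines through $x^-$ share no point of $P_\Pi$. Hence $d_{V_1}(x^-)=\sum_g|g\cap\ell_{a,b}|$, where $g$ runs over the non-empty traces of the lines through $x^-$, and similarly for $V_2$. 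This gives three cases.
\begin{itemize}
\item If $x\in L_{a,b}$, then $\ell_{a,b}$ is one of these traces. Every other trace $g$ through $x^-$ must be disjoint from $\ell_{a,b}$. In the projective case this is impossible. In the affine case it forces $g$ to be parallel to $\ell_{a,b}$, hence to $\ell_{\widetilde a,b}$ (same parallel class $b$), and $g\neq \ell_{\widetilde a,b}$ because $x\notin L_{\widetilde a,b}$. So $x^{\pm}$ satisfies (a).
\item If $x\in L_{\widetilde a,b}$, the symmetric argument shows that $x^{\pm}$ satisfies (b).
\item If $x$ lies on neither line, each trace $g$ differs from both $\ell_{a,b}$ and $\ell_{\widetilde a,b}$. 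Then $|g\cap\ell_{a,b}|=|g\cap\ell_{\widetilde a,b}|$: both equal $1$ in the projective case, while in the affine case both are $0$ or both are $1$ according to whether $g$ is in parallel class $b$. So (c) holds.
\end{itemize}

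Finally, I will check that the switched graph is exactly $\Gamma_{i,j+1}$. The switching moves the adjacencies of the outside vertices of $L_{a,b}$, in both layers, from $\ell_{a,b}^-$ to $\ell_{\widetilde a,b}^-$, and those of $L_{\widetilde a,b}$ the other way. This matches the twist \eqref{eq:twist-a}--\eqref{eq:twist-b}. In $\Gamma_{i,j+1}$, an outside vertex $x^{\pm}$ with $x\in L_{a,b}$ is joined through the label $L_{a,b}$ to the negative trace, which is now $\ell_{\widetilde a,b}^-$. Its adjacency to $\ell_{a,b}^-$ disappears, since no other label joins them, by the disjointness argument above. All positive-layer adjacencies, and all adjacencies among plane vertices, are unchanged. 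I expect the main obstacle to be this bookkeeping of cross-layer adjacencies. The key observation is that $x^+$ and $x^-$ switch together, which is why the switching sets must be the traces rather than the outside parts of the lines.
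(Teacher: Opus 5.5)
There is a genuine gap: your switching sets do not work in the projective case. There $\ell_{a,b}$ and $\ell_{\widetilde a,b}$ are distinct lines of $\PG_q$, so they meet in exactly one point $p$. As you note, $p\notin\ell_0$, but that does not help: $p^-\in V_1\cap V_2$. Hence $V_1\cup V_2\cup W$ is not a partition, and Theorem~\ref{thm:wqh-switching} cannot be applied.

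Your check of (P3)(a) also fails there. For $x\in L_{a,b}\setminus P_\Pi$, the only trace through $x^-$ is $\ell_{a,b}$, and it contains $p$. So $x^-\sim p^-\in V_2$ and $d_{V_2}(x^-)=1\neq 0$. Putting $p^-$ into only one of the two sets does not help, since then $|V_1|\neq|V_2|$.

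The repair, which is exactly the paper's choice, is to remove the common point from both traces:
$V_1=(\ell_{a,b}\setminus\ell_{\widetilde a,b})^-$ and $V_2=(\ell_{\widetilde a,b}\setminus\ell_{a,b})^-$.
Both have size $q$ in the affine and projective cases, and $p^-$ goes into $W$, where it satisfies (c) as a plane vertex. Your trace count then works after one adjustment in the third case. A trace $g$ through $p$ meets neither $V_1$ nor $V_2$, while a trace not through $p$ meets each in exactly one point. In the identification step, outside points of $L_{a,b}$ keep their edge to $p^-$. This agrees with the twist, because $p$ lies on both traces.

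There is also a smaller omission in the affine case. For a general $(q,\beta)$-regular $\mathcal S$, the lifts $L_{a,b}$ and $L_{\widetilde a,b}$ of the parallel lines $\ell_{a,b},\ell_{\widetilde a,b}$ may meet in a point $x\notin P_\Pi$. This cannot happen in a net, but the proposition is stated in the abstract setting. Your first bullet assumes $x\notin L_{\widetilde a,b}$ without justification, and none of your three cases covers such an $x$. The case is harmless. Both $\ell_{a,b}$ and $\ell_{\widetilde a,b}$ are traces of lines through $x^-$, and by partial linearity every other such trace is disjoint from both. So $d_{V_1}(x^\pm)=d_{V_2}(x^\pm)=q$ and (c) holds. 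Moreover, $x$ is joined to both traces before and after the twist. The paper covers this by taking the (a)- and (b)-vertices to be $D_1^0\setminus D_2^0$ and $D_2^0\setminus D_1^0$, and treating their intersection inside $D_3$.

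With these two corrections your argument is essentially the paper's. Your reduction of all outside vertices to the negative layer via Lemma~\ref{lem:basic-structure}(iii) is a mild simplification of the paper's handling of $D_1$, which argues directly with a connecting label $M$ in the $\sigma$-layer.
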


\begin{proof}
Put $r=j+1$ and write $\widetilde i=q+1-i$.
The next twist swaps the negative traces of the line labels $L_{i,r}$ and $L_{\widetilde i,r}$.
Define
\[
C_1=(\ell_{i,r}\setminus \ell_{\widetilde i,r})^-,
\qquad
C_2=(\ell_{\widetilde i,r}\setminus \ell_{i,r})^-.
\]
In both the affine and projective cases, $|C_1|=|C_2|=q$.
Let
\[
D_1^0=(L_{i,r}^+\cup L_{i,r}^-)\setminus (P_\Pi^+\cup P_\Pi^-),
\]
\[
D_2^0=(L_{\widetilde i,r}^+\cup L_{\widetilde i,r}^-)\setminus (P_\Pi^+\cup P_\Pi^-),
\]
and define the disjoint sets
\[
D_1=D_1^0\setminus D_2^0,
\qquad
D_2=D_2^0\setminus D_1^0,
\]
\[
D_3=V(\Gamma_{i,j})\setminus (C_1\cup C_2\cup D_1\cup D_2).
\]
We verify the three hypotheses of Theorem~\ref{thm:wqh-switching} for
\[
V_1=C_1,
\qquad
V_2=C_2,
\qquad
W=D_1\cup D_2\cup D_3.
\]

\smallskip
\noindent\emph{Verification of (P1).}
As noted above, $|C_1|=|C_2|=q$.
Thus (P1) holds.

\smallskip
\noindent\emph{Verification of (P2).}
By Lemma~\ref{lem:basic-structure}(i), the induced subgraph on $P_\Pi^+\cup P_\Pi^-$ is complete.
Hence the induced subgraph on $C_1\cup C_2$ is complete.
For $u\in C_1$ and $v\in C_2$,
\[
d_{C_1}(u)-d_{C_2}(u)=-1=d_{C_2}(v)-d_{C_1}(v),
\]
so (P2) holds.

\smallskip
\noindent\emph{Verification of (P3).}
We first consider vertices in $D_1$.
Let $z^\sigma\in D_1$, where $\sigma\in\{+,-\}$.
Since $z^\sigma$ lies on the line label $L_{i,r}$ in its own layer, it is adjacent to every vertex of $C_1$.
We claim that it is adjacent to no vertex of $C_2$.
Suppose, to the contrary, that $z^\sigma$ is adjacent to $y^-$ for some $y^-\in C_2$.
Then there exists a line label $M$ such that
\[
z^\sigma\in M^\sigma,
\qquad
 y^-\in M^-.
\]
Since $y^-\in P_\Pi^-$, the label $M$ has a non-empty trace in the negative layer; hence it also has a non-empty trace in the $\sigma$-layer.
Let
\[
h^\sigma=M^\sigma\cap P_\Pi^\sigma,
\qquad
h^-=M^-\cap P_\Pi^-.
\]

If \(h^\sigma\) meets \(\ell_{i,r}\), then the two lines
\(M^\sigma\) and \(L_{i,r}^\sigma\) both contain \(z^\sigma\) and also
meet in \(P_\Pi^\sigma\); by partial linearity in the \(\sigma\)-layer,
this forces \(M=L_{i,r}\).
But the pair $(L_{i,r},L_{\widetilde i,r})$ has not yet been twisted in $\Gamma_{i,j}$, so $L_{i,r}^-$ has trace $\ell_{i,r}$, which does not contain $y\in\ell_{\widetilde i,r}\setminus\ell_{i,r}$.
This is a contradiction.

If $h^\sigma$ is parallel to $\ell_{i,r}$, which means $\Pi$ is affine, then $h^-$ is also parallel to $\ell_{i,r}$, because affine twists only replace a trace by a parallel trace.
Since $h^-$ contains the point $y\in\ell_{\widetilde i,r}$ and $\ell_{\widetilde i,r}$ is the unique line through $y$ parallel to $\ell_{i,r}$, we have $h^-=\ell_{\widetilde i,r}$.
The negative-layer line with this trace is uniquely $L_{\widetilde i,r}^-$, since the pair currently being switched has not yet been twisted.
Thus $M=L_{\widetilde i,r}$, which implies $z^\sigma\in L_{\widetilde i,r}^\sigma$.
This contradicts $z^\sigma\in D_1=D_1^0\setminus D_2^0$.
Therefore every vertex of $D_1$ is adjacent to all vertices of $C_1$ and to no vertex of $C_2$.
By symmetry, every vertex of $D_2$ is adjacent to all vertices of $C_2$ and to no vertex of $C_1$.
Thus vertices in $D_1$ satisfy  (P3)(a), and vertices in $D_2$ satisfy  (P3)(b).

It remains to consider vertices in $D_3$.
Let $z^\sigma\in D_3$.
If $z\in P_\Pi$, then Lemma~\ref{lem:basic-structure}~(i) gives
\[
d_{C_1}(z^\sigma)=q=d_{C_2}(z^\sigma).
\]
Assume now that $z\in P\setminus P_\Pi$.
By Lemma~\ref{lem:basic-structure}~(iii), the numbers of neighbours of $z^+$ and $z^-$ in $C_1$ and in $C_2$ are the same.
Hence it is enough to consider $z^-$.
Let
\[
\mathcal H_z=
\{H^-\cap P_\Pi^-:z^-\in H^-,\ |H^-\cap P_\Pi^-|\ge 1\}.
\]
These are the plane traces of the negative-layer lines through $z^-$.
If $\Pi$ is projective, then $\mathcal H_z$ has at most one member: otherwise the corresponding two lines through $z^-$ would then have two common points.  
Moreover, this unique trace, if it exists, cannot be one of $\ell_{i,r}$ and $\ell_{\widetilde i,r}$ by Lemma~\ref{lem:trace-lift}~(ii), since $z^-\notin D_1\cup D_2$. 
 Such a trace meets $C_1$ and $C_2$ in equally many points.
   Hence $d_{C_1}(z^-)=d_{C_2}(z^-)$.

If $\Pi$ is affine, the traces in $\mathcal H_z$ are pairwise parallel.
Any trace not equal to $\ell_{i,r}$ or $\ell_{\widetilde i,r}$ contributes equally to $C_1$ and $C_2$: it contributes $0$ to both if it is parallel to them, and $1$ to both otherwise.
If $\ell_{i,r}\in\mathcal H_z$, then $z^-$ lies in $D_1^0$; since $z^-\in D_3$, it also lies in $D_2^0$, and therefore $\ell_{\widetilde i,r}\in\mathcal H_z$.
The two target traces together contribute $q$ neighbours in $C_1$ and $q$ neighbours in $C_2$.
The same argument applies if $\ell_{\widetilde i,r}\in\mathcal H_z$.
Thus again $d_{C_1}(z^-)=d_{C_2}(z^-)$.
Therefore every vertex in $D_3$ satisfies  (P3)(c).
This completes the verification of  (P3).

By Theorem~\ref{thm:wqh-switching}, the corresponding WQH-switching preserves the spectrum.
It remains only to identify the switched graph.
The switching changes precisely the adjacencies between $C_1\cup C_2$ and those outside vertices which lie in exactly one of the two line labels $L_{i,r}$ and $L_{\widetilde i,r}$, in either layer.
Thus the WQH-switching has exactly the effect of replacing the negative trace $\ell_{i,r}$ of $L_{i,r}^-$ by $\ell_{\widetilde i,r}$ and the negative trace $\ell_{\widetilde i,r}$ of $L_{\widetilde i,r}^-$ by $\ell_{i,r}$.
This is precisely the twist producing $\Gamma_{i,j+1}$ from $\Gamma_{i,j}$.
\end{proof}

\begin{thm}\label{thm:tpl-cospectral}
For every $0\le j\le i\le \lfloor q/2\rfloor$, the graph $\Gamma_{i,j}$ is cospectral with the $2$-clique extension $G^{[2]}$ of the point graph $G$ of $\mathcal S$.
\end{thm}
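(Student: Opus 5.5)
The proof will be an induction along the chain of indices $(0,0)\to(1,0)\to(1,1)\to(2,0)\to(2,1)\to(2,2)\to\cdots$, which runs through every admissible pair $(i,j)$ with $0\le j\le i\le\lfloor q/2\rfloor$. Proposition~\ref{prop:wqh-step} provides one step of this chain. The base case is $\Gamma_{0,0}$. By the remark following Definition~\ref{def:tpl}, this graph is exactly the $2$-clique extension $G^{[2]}$, so it is trivially cospectral with $G^{[2]}$. I will still check this briefly from the definition. Both layers carry the untwisted line set $\mathcal L$. For $x\ne y$, the vertices $x^\varepsilon$ and $y^\delta$ are adjacent if and only if $x,y$ lie on a common line or both lie in $P_\Pi$. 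Since $\Pi$ is a plane, any two points of $P_\Pi$ are collinear in $\Pi$, and the line through them lifts to $\mathcal S$ by Lemma~\ref{lem:trace-lift}(i), so condition (i) of the definition adds nothing new. Finally, $x^+\sim x^-$ holds for every $x$, since $x$ lies on some line. This matches $A(G^{[2]})=J_2\otimes(A(G)+I)-I$.

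The inductive step within a fixed $i\ge1$ is Proposition~\ref{prop:wqh-step} applied with $j=0,1,\ldots,i-1$. This gives the cospectrality chain $\Gamma_{i,0}\sim\Gamma_{i,1}\sim\cdots\sim\Gamma_{i,i}$. To pass between consecutive values of $i$, I will use the identity $\mathcal S^-_{i+1,0}=\mathcal S^-_{i,i}$ noted before Definition~\ref{def:tpl}. It holds because $T_{i+1,0}=T_{i,i}$: both sets consist of the pairs $(a,b)$ with $a\le i$ and $b\le a$. Hence the graphs coincide, $\Gamma_{i+1,0}=\Gamma_{i,i}$. There is also a bookkeeping point between $i=0$ and $i=1$: $T_{1,0}=\varnothing=T_{0,0}$, so $\Gamma_{1,0}=\Gamma_{0,0}$.

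Concatenating these equalities and cospectralities, and using the transitivity of cospectrality, shows that every $\Gamma_{i,j}$ is cospectral with $\Gamma_{0,0}=G^{[2]}$.

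The only step needing care is the identification of $\Gamma_{0,0}$ with $G^{[2]}$, in particular that adjacency condition (i) introduces no extra edges. The paper already asserts this identification, and it follows from Lemma~\ref{lem:trace-lift}(i) as described above. The substantive work has already been done in Proposition~\ref{prop:wqh-step}, so I expect no real obstacle.
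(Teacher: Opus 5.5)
Your proposal is correct and follows the paper's proof. Both start from $\Gamma_{0,0}=G^{[2]}$ and reach every $\Gamma_{i,j}$ by the lexicographically ordered sequence of single twists, each of which is a WQH-switching by Proposition~\ref{prop:wqh-step}. The only difference is that you make explicit what the paper takes for granted: the identities $\Gamma_{1,0}=\Gamma_{0,0}$ and $\Gamma_{i+1,0}=\Gamma_{i,i}$ (via $T_{i+1,0}=T_{i,i}$), and the check that adjacency condition (i) adds no edges to $\Gamma_{0,0}$.
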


\begin{proof}
The graph $\Gamma_{0,0}$ is the untwisted two-layer graph, which is exactly $G^{[2]}$.
The line set defining any $\Gamma_{i,j}$ is obtained from the untwisted line set by a sequence of twists ordered lexicographically by the indices $(a,b)$ with $1\le b\le a\le \lfloor q/2\rfloor$.
Proposition~\ref{prop:wqh-step} shows that each single twist is realized by a WQH-switching, and hence preserves the spectrum.
Therefore every $\Gamma_{i,j}$ is cospectral with $\Gamma_{0,0}=G^{[2]}$.
\end{proof}

\subsection{Common-neighbour counts inside one positive layer}

Fix $0\le j\le i\le \lfloor q/2\rfloor$.
For $x\in P$ define
$N_-(x^+):=\{z\in P:z^-\in N_{\Gamma_{i,j}}(x^+)\}.$

\begin{pro}\label{prop:same-layer}
For any distinct \(x,y\in P\), the following hold in \(\Gamma_{i,j}\).
\begin{enumerate}[label=(\roman*)]
\item If $x\nsim y$ in $G$, then $x^+\nsim y^+$ and
$\mu_{\Gamma_{i,j}}(x^+,y^+)=2\mu_G(x,y).$
\item If $x\sim y$ in $G$, then $x^+\sim y^+$ and
$\lambda_{\Gamma_{i,j}}(x^+,y^+)=2\lambda_G(x,y)+2.$
\end{enumerate}
\end{pro}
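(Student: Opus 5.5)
The plan is to split the common neighbourhood of $x^+$ and $y^+$ in $\Gamma_{i,j}$ by layer: the part lying in $P^+$ and the part lying in $P^-$. I then compare each part with the corresponding quantity in $G$.

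\emph{The positive layer.} The line set on $P^+$ is the untwisted $\mathcal L^+$. By Lemma~\ref{lem:trace-lift}(i), any two points of $P_\Pi$ lie on a common lifted line of $\mathcal S$. So condition (i) of Definition~\ref{def:tpl} adds no new edges inside $P^+$, and the induced subgraph on $P^+$ is exactly $G$. This already gives $x^+\sim y^+$ if and only if $x\sim y$ in $G$. It also shows that the common neighbours of $x^+,y^+$ inside $P^+$ number $\lambda_G(x,y)$ or $\mu_G(x,y)$, as appropriate.

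\emph{The negative layer.} Here I must show that
\[
|N_-(x^+)\cap N_-(y^+)|=\bigl|(N_G(x)\cup\{x\})\cap(N_G(y)\cup\{y\})\bigr|.
\]
The right-hand side equals $\lambda_G(x,y)+2$ if $x\sim y$ and $\mu_G(x,y)$ otherwise. Adding the two layers then gives $2\lambda_G+2$ and $2\mu_G$, which is the statement.

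By Definition~\ref{def:tpl}, $N_-(x^+)$ is the set of points $z$ with $z^-\in\bigcup_{L\ni x}L^-$, together with all of $P_\Pi$ when $x\in P_\Pi$. I will not claim that $N_-(x^+)=N_G(x)\cup\{x\}$, since this can fail. In the projective case, a point $x\notin P_\Pi$ whose unique plane-meeting line has been twisted sees a different trace in the negative layer.

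Instead I would describe $N_-(x^+)$ as $O_x\cup T_x$, with $O_x$ outside $P_\Pi$ and $T_x$ inside it.
\begin{itemize}
\item By Lemma~\ref{lem:basic-structure}(ii), the outside parts of lines are never changed, so $O_x$ coincides with the outside part of $N_G(x)\cup\{x\}$.
\item If $x\in P_\Pi$, then $T_x=P_\Pi$, again as in $G$.
\item If $x\notin P_\Pi$, then $T_x$ is the union of the negative traces $h^-$ of the lines through $x$ that meet the plane. By partial linearity these traces are pairwise disjoint. In the projective case there is at most one such trace. In the affine case they are pairwise parallel, and they stay parallel after the twists.
\end{itemize}

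\emph{Comparing the plane parts.} The outside parts of the intersection agree with $G$ automatically, so only $|T_x\cap T_y|$ needs comparing, in three cases.
\begin{itemize}
\item If both $x,y\in P_\Pi$, there is nothing to show.
\item If exactly one of them, say $x$, lies in $P_\Pi$, then $|P_\Pi\cap T_y|=|T_y|$, which is the sum of the trace sizes. Trace sizes are preserved by Lemma~\ref{lem:basic-structure}(ii).
\item If neither lies in $P_\Pi$, the traces through $x$ are pairwise disjoint and so are those through $y$. Hence $|T_x\cap T_y|=\sum|h_1^-\cap h_2^-|$, summed over pairs of plane-meeting labels through $x$ and through $y$. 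Each term equals $|h_1\cap h_2|$ by the second identity of Lemma~\ref{lem:basic-structure}(ii). When $h_1,h_2$ come from the same label, the term is the full trace size in both graphs.
\end{itemize}
Thus the sizes agree with those in $G$, which gives the displayed identity.

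The main obstacle is this last case. The sets $T_x$ really do change under twisting, so the argument has to go through intersection counts rather than set equality. One has to check carefully that the disjoint-union decomposition of $T_x$ is valid in both the affine and projective cases, using Lemma~\ref{lem:trace-lift}(iii) and Proposition~\ref{prop:twisted-pls}.
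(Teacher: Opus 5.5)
Your proposal is correct and follows essentially the same route as the paper. The positive layer is a copy of $G$, and the negative-layer count $|N_-(x^+)\cap N_-(y^+)|$ equals $|N_G(x)\cap N_G(y)|+2\cdot\mathbf 1_{\{x\sim y\}}$, which is your closed-neighbourhood count; this is proved by the same three cases (how many of $x,y$ lie in $P_\Pi$), with Lemma~\ref{lem:basic-structure}(ii) as the key input. Your split into an outside part $O_x$ and a plane part $T_x$, together with the disjoint-trace sum in the case $x,y\notin P_\Pi$, is just a more explicit version of the paper's ``pairwise intersection sizes are unchanged'' argument.
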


\begin{proof}
The positive layer is a copy of the point graph $G$.
Therefore
$
x^+\sim y^+$ if and only if  $x\sim y$ in $G$,
and the positive-layer contribution to the number of common neighbours is
$|N_G(x)\cap N_G(y)|.$
We prove that the negative-layer contribution is
\begin{equation}\label{eq:minus-claim-revised}
|N_-(x^+)\cap N_-(y^+)|
=
|N_G(x)\cap N_G(y)|+2\cdot\mathbf 1_{\{x\sim y\}}.
\end{equation}

First suppose that $x,y\in P_\Pi$.
Then $x\sim y$ in $G$.
By Lemma~\ref{lem:basic-structure}(i), all vertices of $P_\Pi^-$ are adjacent to both $x^+$ and $y^+$.
Thus the contribution from $P_\Pi^-$ is $|P_\Pi|$, whereas $N_G(x)\cap N_G(y)$ contains $|P_\Pi|-2$ vertices from $P_\Pi$.
Outside $P_\Pi$, the relevant line-label intersections are unchanged by the twists, because outside parts of all line labels are unchanged.
Hence \eqref{eq:minus-claim-revised} holds.

Next suppose that $x,y\in P\setminus P_\Pi$.
In the negative layer, the lines through $x^-$ and the lines through $y^-$ have the same outside parts as the corresponding original line labels.
By Lemma~\ref{lem:basic-structure}~(ii), their pairwise intersection sizes, including their intersections inside $P_\Pi^-$, are the same as before the twist.
Thus all common neighbours other than possibly $x^-$ and $y^-$ are counted exactly as in $G$.
The vertices $x^-$ and $y^-$ are common neighbours of $x^+$ and $y^+$ precisely when $x$ and $y$ lie on a common line of $\mathcal S$, that is, precisely when $x\sim y$ in $G$.
This proves \eqref{eq:minus-claim-revised} in this case.

Finally suppose, without loss of generality, that $x\in P\setminus P_\Pi$ and $y\in P_\Pi$.
The outside common neighbours other than possibly $x^-$ are again counted as in $G$, because outside parts of line labels are unchanged by Lemma~\ref{lem:basic-structure}~(ii).
The vertex $x^-$ is a common neighbour precisely when $x\sim y$ in $G$.
For the contribution from $P_\Pi^-$, Lemma~\ref{lem:basic-structure}~(i) gives
$P_\Pi^-\subseteq N_{\Gamma_{i,j}}(y^+).$
Hence
\[
N_-(x^+)\cap N_-(y^+)\cap P_\Pi^-=N_-(x^+)\cap P_\Pi^-.
\]
The latter set is the union of the traces in $P_\Pi^-$ of the negative-layer realizations of the line labels through $x$.
By Lemma~\ref{lem:basic-structure}~(ii), its cardinality is equal to the corresponding untwisted cardinality.
In the original point graph, however, $N_G(x)\cap N_G(y)\cap P_\Pi$ excludes the point $y$, while the trace union through $x$ contains $y$ exactly when $x\sim y$.
Thus the $P_\Pi^-$ contribution is larger by $\mathbf 1_{\{x\sim y\}}$, and together with the possible contribution of $x^-$ this gives \eqref{eq:minus-claim-revised}.

Adding the positive-layer contribution proves both assertions.
\end{proof}

\begin{cor}\label{cor:sesqui}
Let $G^-$ be the point graph of the twisted partial linear space $\mathcal S^-_{i,j}$.
If both $G$ and $G^-$ are sesqui-regular with parameter $\mu>0$ and diameter $D$, then $\Gamma_{i,j}$ is sesqui-regular with parameter $2\mu$ and diameter at most $D$.
\end{cor}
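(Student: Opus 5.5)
We must show that $\Gamma_{i,j}$ is regular, that non-adjacent pairs at distance two have exactly $2\mu$ common neighbours, and that the diameter is at most $D$. Regularity is Lemma~\ref{lem:regular-degree}, so only the $\mu$-values and the diameter need work. We sort the non-adjacent pairs $\{x^\varepsilon,y^\delta\}$ by layer and handle each case separately.

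\emph{Positive layer.} If $x^+\nsim y^+$, then $x\nsim y$ in $G$, and Proposition~\ref{prop:same-layer}(i) gives $\mu_{\Gamma_{i,j}}(x^+,y^+)=2\mu_G(x,y)$. If $x^+,y^+$ are at distance two in $\Gamma_{i,j}$, this count is positive. Hence $x,y$ are at distance two in $G$, so $\mu_G(x,y)=\mu$ and the count is $2\mu$.

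\emph{Negative layer.} By Proposition~\ref{prop:twisted-pls}, $\mathcal S^-_{i,j}$ is a $(q,\beta)$-regular partial linear space containing $\Pi$ as a subspace. The definition of $\Gamma_{i,j}$ is symmetric in the two layers, since adjacency only asks for a common line label. So I would rerun the proof of Proposition~\ref{prop:same-layer} with the roles of $+$ and $-$ exchanged, and with $G^-$ in place of $G$. That argument only used Lemma~\ref{lem:basic-structure}(i),(ii), and these are symmetric statements: intersections of corresponding line labels agree in size across the layers, both overall and inside $P_\Pi$. The result is $\mu_{\Gamma_{i,j}}(x^-,y^-)=2\mu_{G^-}(x,y)$ for $x\nsim y$ in $G^-$, which equals $2\mu$ at distance two.

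\emph{Mixed pairs.} Take $x^+\nsim y^-$ with $x\ne y$. If $x\notin P_\Pi$, then $N(x^+)\cup\{x^+\}=N(x^-)\cup\{x^-\}$ by Lemma~\ref{lem:basic-structure}(iii). Hence $x^-\nsim y^-$, and the common neighbours of $x^+,y^-$ are exactly those of $x^-,y^-$ (note $x^-\notin N(y^-)$ and $y^-\notin N(x^+)$). This reduces the case to the negative layer. The case $y\notin P_\Pi$ is symmetric and reduces to the positive layer. If $x,y\in P_\Pi$, the two vertices are adjacent by Lemma~\ref{lem:basic-structure}(i), so there is nothing to check. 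Finally, the pair $x^+,x^-$ is always adjacent: by Lemma~\ref{lem:basic-structure}(i) if $x\in P_\Pi$, and because $x$ lies on a line otherwise.

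\emph{Diameter.} The same reductions show that $\Gamma_{i,j}$ has distance at most $D$ between any two vertices. Within a layer, the layer graph ($G$ or $G^-$) is a subgraph of $\Gamma_{i,j}$, so distances there are at most $D$. Each mixed pair either is adjacent or is replaced by a same-layer pair with the same closed neighbourhood, so its distance is also at most $D$.

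The main obstacle is the negative-layer case. Proposition~\ref{prop:same-layer} is stated only for the positive layer, so one must check that its proof genuinely uses only facts that are symmetric in the two layers. In particular, in the case $x\notin P_\Pi$, $y\in P_\Pi$, the count of traces through $x^-$ must be compared with traces in $P_\Pi^+$, where the positive layer is the untwisted one. I would handle this with the intersection-size equalities of Lemma~\ref{lem:basic-structure}(ii). Because those equalities go in both directions, the comparison works in the same way as in the positive-layer proof.
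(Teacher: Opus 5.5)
Your proposal is correct and follows essentially the same route as the paper. You reduce mixed pairs to same-layer pairs via the closed-twin property of Lemma~\ref{lem:basic-structure}(iii), apply Proposition~\ref{prop:same-layer}(i) in the positive layer and its layer-swapped analogue in the negative layer (which the paper justifies only as ``the same argument applied to $G^-$''), and obtain the diameter bound from paths inside a single layer; you are simply more explicit than the paper in checking that the layer swap is legitimate and in handling the pairs $x^+,x^-$.
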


\begin{proof}
Let $x^\varepsilon$ and $y^\delta$ be two non-adjacent vertices of $\Gamma_{i,j}$.
Since $P_\Pi^+\cup P_\Pi^-$ is a clique, we may assume that $y\in P\setminus P_\Pi$.
By Lemma~\ref{lem:basic-structure}~(iii), 
$\mu_{\Gamma_{i,j}}(x^\varepsilon,y^\delta)=\mu_{\Gamma_{i,j}}(x^\varepsilon,y^\varepsilon)$.
%replacing $y^\delta$ by the other copy of $y$ changes only the vertex $y^\delta$ itself, which is not a common neighbour of the pair.
Hence we may assume $\delta=\varepsilon$.

If $\varepsilon=+$, Proposition~\ref{prop:same-layer}~(i) gives
$
\mu_{\Gamma_{i,j}}(x^+,y^+)=2\mu_G(x,y)\in\{0,2\mu\}.$
If $\varepsilon=-$, the same argument applied to the point graph $G^-$ of the negative layer gives
$\mu_{\Gamma_{i,j}}(x^-,y^-)=2\mu_{G^-}(x,y)\in\{0,2\mu\}.$
Thus $\Gamma_{i,j}$ is sesqui-regular with parameter $2\mu$.

Let $d^\varepsilon$ be the distance between $x^\varepsilon$ and $y^\varepsilon$ in $G^\varepsilon$.
There is a path $x^\varepsilon z^\varepsilon_1 z^\varepsilon_{2}\cdots z^\varepsilon_{d^\varepsilon-1}y^\varepsilon$ in $G^\varepsilon$.
By Lemma~\ref{lem:basic-structure}~(iii), $z^\varepsilon_{d^\varepsilon-1}\in N_{\Gamma_{i,j}}(y^\delta)$.
Thus, there is a path of length $d^\varepsilon$ from $x^\varepsilon$ to $y^\delta$.
It follows that the diameter of $\Gamma_{i,j}$ is at most $D$.
\end{proof}

\subsection{Special common-neighbour counts across the two layers}

From now until the end of this section, fix $1\le i\le \lfloor q/2\rfloor$ and work in $\Gamma_{i,i}$.
For $1\le t\le i$, write $\widetilde t=q+1-t$.

\begin{pro}\label{prop:special-values}
Let $1\le t\le i$.
The following estimates hold in $\Gamma_{i,i}$.
\begin{enumerate}[label=\textup{(\roman*)}]
\item
$0\le
\lambda_{\Gamma_{i,i}}(u_t^+,u_t^-)
-
\bigl(2|P_\Pi|-2+2(q+1-t)(\beta-\alpha)\bigr)
\le 2t^2.$
\item
$0\le
\lambda_{\Gamma_{i,i}}(u_t^+,u_{\widetilde t}^-)
-
\bigl(2|P_\Pi|-2+2(t+1)(\beta-\alpha)\bigr)
\le 2(q-t)^2.$
\item If $w\in \ell_{1,1}\setminus\bigl(\{u_1\}\cup \ell_{q,1}\bigr),$
then
$
0\le
\lambda_{\Gamma_{i,i}}(u_1^+,w^-)-(2|P_\Pi|-2)
\le 2(q+1)^2.$
\end{enumerate}
\end{pro}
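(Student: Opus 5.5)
The plan is to split the common neighbourhood of each pair into its part inside $P_\Pi^+\cup P_\Pi^-$ and its part outside it. By Lemma~\ref{lem:basic-structure}(i) the set $P_\Pi^+\cup P_\Pi^-$ is a clique. So whenever both vertices of the pair lie in it, they have exactly $2|P_\Pi|-2$ common neighbours there. This accounts for the common term $2|P_\Pi|-2$ in (i)--(iii).

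For an outside point $z\in P\setminus P_\Pi$, Lemma~\ref{lem:basic-structure}(iii) gives $N[z^+]=N[z^-]$. Hence $z^\sigma$ is adjacent to $x^+$ (for $x\in P_\Pi$) if and only if $z$ lies, in the positive layer, on a line label through $x$. Likewise $z^\sigma\sim y^-$ if and only if $z^-$ lies on a negative-layer line $L^-$ with $y^-\in L^-$. The roles of $z^+$ and $z^-$ are symmetric here, so every admissible outside point is counted exactly twice. This gives
\[
\lambda_{\Gamma_{i,i}}(x^+,y^-)=2|P_\Pi|-2+2\,\Bigl|\Bigl(\bigcup_{L\in\mathcal A_x}L\Bigr)\cap\Bigl(\bigcup_{M\in\mathcal B_y}M^-\Bigr)\setminus P_\Pi\Bigr|,
\]
where $\mathcal A_x$ is the set of $q+1$ labels whose original trace contains $x$, and $\mathcal B_y$ is the set of $q+1$ labels whose negative realization in $\mathcal S^-_{i,i}$ contains $y^-$. (Here I identify outside points of $L^-$ with those of $L$, as in Lemma~\ref{lem:basic-structure}(ii).)

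Next I will split the outside intersection by pairs $(L,M)\in\mathcal A_x\times\mathcal B_y$. If $L=M$ is a common label, it contributes exactly its $\beta-\alpha$ outside points. If $L\neq M$, the pair contributes at most one point, by Lemma~\ref{lem:basic-structure}(ii) together with partial linearity. Moreover, if such $L$ and $M$ already meet inside $P_\Pi$, they contribute nothing outside. The lower bound in each item then comes from the common labels alone. The upper bound comes from counting the distinct pairs that can meet outside and bounding each such contribution by one.

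To identify the labels, I read off from $T_{i,i}$ which pairs are twisted. For $1\le t\le i$, the pairs $(t,b)$ are twisted exactly for $b\le t$.

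In (i), $\mathcal A_{u_t}=\{L_0,L_{t,1},\dots,L_{t,q}\}$. In the negative layer the line through $u_t^-$ with trace $\ell_{t,b}$ has label $L_{\widetilde t,b}$ when $b\le t$ and label $L_{t,b}$ when $b>t$. So $\mathcal B_{u_t}=\{L_0\}\cup\{L_{t,b}:b>t\}\cup\{L_{\widetilde t,b}:b\le t\}$, giving $q+1-t$ common labels. The remaining pairs are $\{L_{t,b}:b\le t\}\times\{L_{\widetilde t,b'}:b'\le t\}$, at most $t^2$ of them, which yields the bound $2t^2$. Mixed pairs involving $L_0$ or $L_{t,b}$ with $b>t$ meet inside $P_\Pi$ (at $u_t$), so they contribute nothing outside.

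Item (ii) is the same computation with $\widetilde t$ in place of $t$ for the negative vertex. The common labels are $L_0$ and the $t$ labels $L_{t,b}$ with $b\le t$, which gives $t+1$. The leftover pairs number at most $(q-t)^2$. Those pairs are $L_{t,b}$ with $b>t$ against $L_{\widetilde t,b'}$ with $b'>t$.

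In (iii), I will check that there is no common label at all. A common label needs a positive trace through $u_1$ and a negative trace through $w$. The only line of $\Pi$ through both points is $\ell_{1,1}$, but its negative realization is $L_{q,1}^-$. That label's positive trace is $\ell_{q,1}$, which misses $u_1$. So the lower bound is trivially $0$, and the $(q+1)^2$ pairs each contribute at most $1$.

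The main obstacle I expect is bookkeeping rather than any new idea. I have to determine correctly, for each trace through $u_t$ or $w$, which label carries it in $\mathcal S^-_{i,i}$. In the affine case I also need to make sure that non-parallel traces meet inside $\Pi$, so that they cannot meet again outside. I will handle this by first recording the label/trace table for the twisted pairs $(t,b)$, $b\le t$, and for their partners $(\widetilde t,b)$. Every case above is then a direct lookup in that table.
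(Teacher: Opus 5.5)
Your treatment of the baseline $2|P_\Pi|-2$ is correct and is essentially the paper's argument. The same holds for the doubling of outside common neighbours via Lemma~\ref{lem:basic-structure}(iii) and for items (i) and (ii): you use the same label lists, the same $q+1-t$ and $t+1$ common labels, and the same $t^2$ and $(q-t)^2$ leftover pairs.

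The problem is in (iii). To show there is no common label, you only examine the negative line through $w^-$ whose trace is $\ell_{1,1}$, the unique plane line through $u_1$ and $w$. This tacitly assumes that every other label has the same trace in both layers, which is false for twisted labels. The label $L_{1,1}$ has positive trace $\ell_{1,1}\ni u_1$ but negative trace $\ell_{q,1}$. So $L_{1,1}$ is a common label precisely when $w\in\ell_{q,1}$.

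Your argument never uses the hypothesis $w\notin\ell_{q,1}$, and it cannot be valid without it. In the projective case, the point $w_0=\ell_{1,1}\cap\ell_{q,1}$ lies in $\ell_{1,1}\setminus\{u_1\}$, because $\ell_{q,1}$ meets $\ell_0$ only in $u_q\neq u_1$. For $w=w_0$, all $\beta-\alpha$ outside points of $L_{1,1}$, in both layers, are common neighbours of $u_1^+$ and $w_0^-$. The upper bound $2(q+1)^2$ then fails as soon as $\beta-\alpha>(q+1)^2$.

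The fix is to enumerate from the $u_1^+$ side, as the paper does. The labels through $u_1^+$ are $L_0,L_{1,1},\dots,L_{1,q}$, and each negative trace misses $w$:
\begin{itemize}
\item $L_0$ is never twisted, and $\ell_0\cap\ell_{1,1}=\{u_1\}$, so $w\notin\ell_0$.
\item For $b\ge2$, the pair $(1,b)$ is not in $T_{i,i}$, and no other twist touches $L_{1,b}$. Hence $L_{1,b}^-$ has trace $\ell_{1,b}$, which meets $\ell_{1,1}$ only in $u_1\neq w$.
\item $L_{1,1}^-$ has trace $\ell_{q,1}$, which misses $w$ by hypothesis. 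In the affine case this is automatic, by parallelism.
\end{itemize}
With this check in place, the rest of your count for (iii), namely at most $(q+1)^2$ label pairs each contributing at most one point per layer, goes through.
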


\begin{proof}
	By Lemma~\ref{lem:basic-structure}~(i), for each vertex of $P_\Pi^+\cup P_\Pi^-$, adjacency inside this set is complete.
Thus each pair considered below has the baseline contribution $2|P_\Pi|-2$ from $P_\Pi^+\cup P_\Pi^-$.
All remaining common neighbours lie outside $P_\Pi^+\cup P_\Pi^-$.

\smallskip
\noindent\emph{Proof of (i).}
The vertex $u_t^+$ uses the line labels
\[
L_0,L_{t,1},L_{t,2},\ldots,L_{t,q}.
\]
In the negative layer, after the twists indexed by $T_{i,i}$, the vertex $u_t^-$ lies on
\[
L_0,
\quad L_{t,b}\ (b>t),
\quad L_{\widetilde t,b}\ (1\le b\le t).
\]
Hence the common line labels are
\[
L_0,L_{t,t+1},L_{t,t+2},\ldots,L_{t,q},
\]
altogether $q+1-t$ labels.
Each contributes exactly $\beta-\alpha$ outside points in each layer.
This gives the lower bound
\[
2|P_\Pi|-2+2(q+1-t)(\beta-\alpha).
\]
Any additional outside common neighbour must arise from the intersection, in one of the two layers, of a label $L_{t,b}$ with $1\le b\le t$ and a label $L_{\widetilde t,c}$ with $1\le c\le t$.
There are $t^2$ such pairs, and by partial linearity each pair contributes at most one point in each layer.
Thus the number of additional common neighbours is at most $2t^2$.

\smallskip
\noindent\emph{Proof of (ii).}
The vertex $u_{\widetilde t}^-$ lies in the negative layer on
\[
L_0,
\quad L_{\widetilde t,b}\ (b>t),
\quad L_{t,b}\ (1\le b\le t).
\]
The common line labels with those used by $u_t^+$ are therefore
\[
L_0,L_{t,1},L_{t,2},\ldots,L_{t,t},
\]
altogether $t+1$ labels.
They give the lower bound
\[
2|P_\Pi|-2+2(t+1)(\beta-\alpha).
\]
Any further outside common neighbour can only come from intersections, in one of the two layers, between a label $L_{t,b}$ with $b>t$ and a label $L_{\widetilde t,c}$ with $c>t$.
There are $(q-t)^2$ such pairs, and each contributes at most one point in each layer.
This proves (ii).

\smallskip
\noindent\emph{Proof of (iii).}
The baseline contribution $2|P_\Pi|-2$ is immediate from the clique on $P_\Pi^+\cup P_\Pi^-$.
We show that no line label used by $u_1^+$ is also used by $w^-$ in the negative layer.
The labels used by $u_1^+$ are
\[
L_0,L_{1,1},L_{1,2},\ldots,L_{1,q}.
\]
After the twist of the pair $(L_{1,1},L_{q,1})$, the negative trace of $L_{1,1}$ is $\ell_{q,1}$.
In the affine case, $\ell_{1,1}$ and $\ell_{q,1}$ are parallel, so $w\notin\ell_{q,1}$.
In the projective case, $w$ was chosen outside $\ell_{q,1}$.
For $b\ge2$, the line $\ell_{1,b}$ meets $\ell_{1,1}$ only in $u_1$, and $w\ne u_1$; also $w\notin\ell_0$.
Thus none of the labels through $u_1^+$ has negative trace containing $w$.

Consequently, every outside common neighbour of $u_1^+$ and $w^-$ must arise from an intersection of one of the $q+1$ labels used by $u_1^+$ and one of the $q+1$ negative-layer labels through $w^-$.
No common label occurs, and by partial linearity each ordered pair of labels contributes at most one point in each layer.
Hence there are at most $2(q+1)^2$ additional common neighbours.
\end{proof}

\subsection{The $\lambda$-level}
The following convenient size assumption guarantees that the special \(\lambda\)-values are separated.
\begin{cor}\label{cor:tpl-level}
Assume that
$\beta\ge |P_\Pi|+(q+1)^2.$
If $\Lambda(G)=\gamma$, then
\[
\Lambda(\Gamma_{i,i})\ge \gamma+2i+2
\qquad\text{for } i<q/2.
\]
Moreover,
$\Lambda(\Gamma_{\lfloor q/2\rfloor,\lfloor q/2\rfloor})\ge \gamma+q+1.$
\end{cor}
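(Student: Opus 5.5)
We will exhibit $\gamma+2i+2$ pairwise distinct values of $\lambda_{\Gamma_{i,i}}$ on edges, split into a family of ``ordinary'' values and a family of ``special'' values. The ordinary family consists of the values $2\lambda_G(x,y)+2$ with $x\sim y$ in $G$; by Proposition~\ref{prop:same-layer}(ii) these occur on the positive-layer edges $x^+y^+$, and there are exactly $\gamma$ of them. The special family consists of the values in Proposition~\ref{prop:special-values}: the values in (i) and (ii) for $t=1,\dots,i$, and the value in (iii). First we check that each pair there is indeed an edge. This holds because all these vertices lie in $P_\Pi^\pm$, which is a clique by Lemma~\ref{lem:basic-structure}(i). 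We also check that the point $w$ in (iii) exists, since $\ell_{1,1}$ has $\alpha\ge 1$ points besides $u_1$ not on $\ell_{q,1}$.

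Write $\delta=\beta-\alpha$ and $B=2|P_\Pi|-2$. By Proposition~\ref{prop:special-values}, the value in (i) lies in $B+2(q+1-t)\delta+[0,2t^2]$, the value in (ii) lies in $B+2(t+1)\delta+[0,2(q-t)^2]$, and the value in (iii) lies in $B+[0,2(q+1)^2]$. The set of multipliers $k$ of $2\delta$ that arise is $\{q+1-t,\ t+1: 1\le t\le i\}\cup\{0\}$. For $i<q/2$ we have $t+1\le i+1\le q/2+\tfrac12<q+1-i\le q+1-t'$, so these multipliers are pairwise distinct. This gives $2i+1$ distinct values of $k$, all of them in $[0,q]$. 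Every error term is at most $2(q+1)^2$, while consecutive windows are separated by at least $2\delta$. The hypothesis $\beta\ge|P_\Pi|+(q+1)^2$ gives $\delta\ge(q+1)^2$, since $|P_\Pi|\ge\alpha$, and hence $2\delta>2(q+1)^2$ after a small slack check. Therefore the windows are disjoint, and the $2i+1$ special values are pairwise distinct.

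Next we separate the special values from the ordinary ones. Every ordinary value is at most $2(q+1)\beta+\dots$, which is too crude, so we argue directly instead. For $x\sim y$ in $G$ we have $\lambda_G(x,y)\le \beta-1+(q)(q)$, because two points share one line of size $\beta+1$ and each of the other $q$ lines through $x$ meets the $q$ other lines through $y$ in at most one point. Hence the ordinary values are at most $2\beta+2q^2$. The smallest special window other than $k=0$ starts at $B+2\cdot 2\delta\ge 4\delta$ (using $k\ge2$ when $t\ge1$), and this exceeds $2\beta+2q^2$ because $\delta$ is large compared with $\beta$ only if $\alpha$ is small. Here is the main obstacle. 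Since $\delta=\beta-\alpha$, we get $4\delta-2\beta=2\beta-4\alpha>2q^2+\dots$ from the hypothesis. For the $k=0$ window we need a separate argument, and it alone may coincide with an ordinary value. This is why we expect to obtain only $\gamma+2i+1$ values from the windows with $k\ge1$; the last one is gained from $k=0$ by comparing with the ordinary values, which are at least $2\cdot 0+2$ but could be as large as about $2\beta$. Value (iii) lies in $[B,B+2(q+1)^2]$. Meanwhile every positive-layer edge of $G$ lies on a line, so $\lambda_G\ge\beta-1$, and the ordinary values are at least $2\beta>B+2(q+1)^2$ by the hypothesis. This is the precise use of $\beta\ge|P_\Pi|+(q+1)^2$. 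So (iii) is also new. Counting gives $\gamma+(2i+1)+1=\gamma+2i+2$ distinct values in total.

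For $i=\lfloor q/2\rfloor$ the same argument applies, except that when $q$ is odd we have $q+1-i=i+1$. The multipliers from (i) at $t=i$ and from (ii) at $t=i$ then coincide, so one value is lost. When $q$ is even, the pair with $t=i$ in (ii) gives $k=i+1$, which equals the multiplier from (i) at $t=i-1$... We will simply count the distinct elements of $\{q+1-t,t+1:1\le t\le \lfloor q/2\rfloor\}$. This set is exactly $\{2,\dots,q\}$, which has $q-1$ elements. Adding $k=0$ and the ordinary values gives $\gamma+q+1$.
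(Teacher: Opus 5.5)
There is a genuine gap: the final count is off by one. Your separation estimates are essentially sound. The hypothesis gives $\delta=\beta-\alpha>(q+1)^2$, so the windows $B+2k\delta+[0,2(q+1)^2]$ for distinct $k$ are disjoint. The windows with $k\ge2$ lie above $[2\beta,2\beta+2q^2]$, and the value from Proposition~\ref{prop:special-values}(iii) lies below $2\beta$.

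The error is in the tally. Your $2i+1$ multipliers $\{q+1-t,\,t+1:1\le t\le i\}\cup\{0\}$ already include $k=0$, which is the value from (iii). So $\gamma+(2i+1)+1$ counts (iii) twice, and what your argument actually gives is $\Lambda(\Gamma_{i,i})\ge\gamma+2i+1$. The same slip occurs at $i=\lfloor q/2\rfloor$: the set $\{0\}\cup\{2,\dots,q\}$ has $q$ elements, so you obtain only $\gamma+q$, not $\gamma+q+1$. As a side remark, the coincidence $q+1-i=i+1$ occurs for $q$ even, where the windows of $\lambda_{\Gamma_{i,i}}(u_i^+,u_i^-)$ and $\lambda_{\Gamma_{i,i}}(u_i^+,u_{\widetilde i}^-)$ merge, not for $q$ odd. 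This does not affect your final set computation.

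The missing ingredient is one more edge whose $\lambda$-value lies above all your windows. Take any $y\in P\setminus P_\Pi$. By Lemma~\ref{lem:basic-structure}(iii), $y^+\sim y^-$, and $N(y^+)\cup\{y^+\}=N(y^-)\cup\{y^-\}$ has size $2(q+1)\beta+2$. Hence $\lambda_{\Gamma_{i,i}}(y^+,y^-)=2(q+1)\beta$. This exceeds the top of the highest window, $k=q$, which comes from $t=1$ in (i) with error at most $2$:
\[
2(q+1)\beta-\bigl(2|P_\Pi|-2+2q\delta+2\bigr)=2(\beta-|P_\Pi|)+2q\alpha>0 .
\]
It also clearly exceeds $2\beta+2q^2$. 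Adding this value to your list gives exactly $\gamma+2i+2$ for $i<q/2$ and $\gamma+q+1$ for $i=\lfloor q/2\rfloor$. This is precisely the extra value the paper's proof uses.
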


\begin{proof}
Let
$\Lambda_0=
\{2\lambda_G(x,y)+2:x\sim y\text{ in }G\}.$
By Proposition~\ref{prop:same-layer}(ii), every value in $\Lambda_0$ occurs as a $\lambda$-value in $\Gamma_{i,i}$, and $|\Lambda_0|=\gamma$.
If $x\sim y$ in $G$, then $x$ and $y$ lie on a unique line of $\mathcal S$.
This line contributes $\beta-1$ common neighbours.
Each of the remaining $q$ lines through $x$ meets each of the remaining $q$ lines through $y$ in at most one point, so
\[
\beta-1\le \lambda_G(x,y)\le \beta-1+q^2.
\]
Therefore
\begin{equation}\label{eq:lambda0-interval}
\Lambda_0\subseteq [2\beta,2\beta+2q^2].
\end{equation}

Choose $w$ as in Proposition~\ref{prop:special-values}(iii).
Then
\[
\lambda_{\Gamma_{i,i}}(u_1^+,w^-)
\le 2|P_\Pi|-2+2(q+1)^2
<2\beta,
\]
so this value lies strictly below all values in $\Lambda_0$.

Put
$D=\beta-\alpha.$
Since $\beta\ge |P_\Pi|+(q+1)^2$ and $\alpha\le q$, we have
\begin{equation}\label{eq:D-large}
D>(q-1)^2.
\end{equation}
For $1\le t\le i$, set
\[
A_t=\lambda_{\Gamma_{i,i}}(u_t^+,u_{\widetilde t}^-),
\qquad
B_t=\lambda_{\Gamma_{i,i}}(u_t^+,u_t^-).
\]
By Proposition~\ref{prop:special-values},
\[
2|P_\Pi|-2+2(t+1)D
\le A_t
\le
2|P_\Pi|-2+2(t+1)D+2(q-t)^2,
\]
and
\[
2|P_\Pi|-2+2(q+1-t)D
\le B_t
\le
2|P_\Pi|-2+2(q+1-t)D+2t^2.
\]
Using \eqref{eq:D-large}, these estimates imply, for $i<q/2$,
\[
A_1<A_2<\cdots<A_i<B_i<B_{i-1}<\cdots<B_1.
\]
Indeed, consecutive separations follow from the inequalities
\[
D>(q-t)^2,
\qquad
D>t^2,
\qquad
D\ge \frac{(q-i)^2}{q-2i},
\]
which are all consequences of \eqref{eq:D-large} in the indicated ranges.
Moreover,
$A_1>2\beta+2q^2,$
again by the lower bound for $A_1$ and the hypothesis on $\beta$.
Thus all $A_t$ and $B_t$ are strictly above the interval \eqref{eq:lambda0-interval}.
Finally, for any $y\in P\setminus P_\Pi$, Lemma~\ref{lem:basic-structure}~(iii) gives
$$\lambda_{\Gamma_{i,i}}(y^+,y^-)=2(q+1)\beta,$$
and this is strictly larger than $B_1$.
Hence, for $i<q/2$, the graph $\Gamma_{i,i}$ has at least
\[
\gamma+1+i+i+1=\gamma+2i+2
\]
distinct $\lambda$-values.

Now assume that $q$ is even and $i=q/2$.
The same estimates give
\[
A_1<\cdots<A_{i-1}<B_i<B_{i-1}<\cdots<B_1,
\]
with all these values above $\Lambda_0$, while $\lambda_{\Gamma_{i,i}}(u_1^+,w^-)$ is below $\Lambda_0$ and $\lambda_{\Gamma_{i,i}}(y^+,y^-)$ is above $B_1$.
This gives at least
\[
\gamma+1+(i-1)+i+1=\gamma+q+1
\]
distinct $\lambda$-values.
For odd \(q\), we have \(\lfloor q/2\rfloor<q/2\), so the first part gives
\[
\Lambda(\Gamma_{\lfloor q/2\rfloor,\lfloor q/2\rfloor})
\ge \gamma+2\lfloor q/2\rfloor+2
=\gamma+q+1.
\]
The even case was just proved, so the final assertion follows.
\end{proof}

\begin{thm}\label{thm:tpl-lambda}
Assume that
$\beta\ge |P_\Pi|+(q+1)^2.$
If $\Lambda(G)=\gamma$, then
\[
\Lambda(\Gamma_{\lfloor q/2\rfloor,\lfloor q/2\rfloor})\ge \gamma+q+1.
\]
On the other hand, the $2$-clique extension $G^{[2]}$ has $\lambda$-level at most $\gamma+1$ whenever $G$ is regular.
\end{thm}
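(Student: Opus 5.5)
The theorem has two assertions, and both follow from results already established; the plan is simply to assemble them.

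For the first assertion, apply Corollary~\ref{cor:tpl-level} with $i=\lfloor q/2\rfloor$. Its second part gives exactly $\Lambda(\Gamma_{\lfloor q/2\rfloor,\lfloor q/2\rfloor})\ge \gamma+q+1$ under the hypothesis $\beta\ge |P_\Pi|+(q+1)^2$. That corollary already handles the parity split. For odd $q$ we have $\lfloor q/2\rfloor<q/2$, so the bound $\gamma+2i+2$ evaluates to $\gamma+q+1$. For even $q$ the value $A_{q/2}$ coincides with $B_{q/2}$, since then $\widetilde t=t$ at $t=q/2$; one $A$-value is lost, but the count is still $\gamma+q+1$. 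I would only restate this, checking that the separations used there are valid at $i=\lfloor q/2\rfloor$. The separations in question are: the value at $(u_1^+,w^-)$ lies strictly below $[2\beta,2\beta+2q^2]$, the $A_t$ and $B_t$ lie strictly above it, and $\lambda(y^+,y^-)=2(q+1)\beta$ exceeds $B_1$. The existence of $w$ in Proposition~\ref{prop:special-values}(iii) also needs checking, which holds since $\ell_{1,1}$ has $\alpha+1\ge q\ge 2$ points and in the projective case loses only $u_1$ to $\ell_{q,1}$ (using $q\ge2$, so $q+1\ge3$ points).

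For the second assertion, invoke Lemma~\ref{lam cl ext} with $t=2$. A regular $G$ has a single degree value, so $\Lambda(G^{[2]})\le\Lambda(G)+1=\gamma+1$. Concretely, edges between distinct cliques have $\lambda=2\lambda_G(x,y)+2$, giving $\gamma$ values, and edges inside a clique give the single value $2k$.

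There is no real obstacle here. The only point deserving care is the even case $i=q/2$, where one must confirm that the lost coincidence $A_{q/2}=B_{q/2}$ is the only collapse. The remaining inequality $A_{i-1}<B_i$ follows from $D>(q-1)^2$, because $B_i-A_{i-1}\ge 2D\cdot 1-2(q-i+1)^2$ and $(q-i+1)^2\le(q-1)^2$ when $i\ge2$. The case $q=2$ is trivial.
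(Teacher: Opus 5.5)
Your proposal is correct and is exactly the paper's argument: the lower bound is the final assertion of Corollary~\ref{cor:tpl-level}, and the upper bound is Lemma~\ref{lam cl ext} with $t=2$ applied to the regular graph $G$. One slip in your side commentary: for even $q$ and $t=q/2$ one has $\widetilde t=q/2+1\neq t$, so it is not shown that $A_{q/2}$ and $B_{q/2}$ coincide. Rather, their leading terms $2(t+1)D$ and $2(q+1-t)D$ agree, so the estimates of Proposition~\ref{prop:special-values} cannot separate them, and that is why only one of the two is counted.
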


\begin{proof}
The lower bound is Corollary~\ref{cor:tpl-level}.
The upper bound for $G^{[2]}$ follows from Lemma~\ref{lam cl ext}, since $G$ is regular.
\end{proof}

\section{Applications to co-edge-regular graphs with four eigenvalues}\label{sec:applications}

We now turn the abstract construction into concrete families of graphs.
The two sources are standard finite-geometric incidence structures containing the required plane subspace: nets containing affine planes, and regular semiplanes containing projective planes.
In each case, we first record the existence input, then check that the twisted negative layer remains in the same class, and finally apply the results of Section~\ref{sec:tpl-properties}.
The resulting graphs are co-edge-regular graphs with exactly four distinct eigenvalues, smallest eigenvalue $-2q-1$, and coherent rank at least $q+4$.

\subsection{Nets and twisted Latin square graphs}\label{ls}

A \emph{net} $\mathcal{N}(q+1,n)$ of order $n$ and degree $q+1$  is a $(q,n-1)$-regular partial linear space in which, for every
point $p$ and every line $L$ with $p\notin L$, there is a unique line
through $p$ disjoint from $L$.
Equivalently, such a net is the dual incidence structure associated
with an orthogonal array $\OA(q+1,n)$, or with a transversal design
$TD(q+1,n)$; see, for example,
\cite[Chapter~X]{BJL1999}.
In particular, a net of order \(q\) and degree \(q+1\) is an affine plane
\(\AG_q\) of order \(q\). 
The point graph of such a net $\mathcal{N}(q+1,n)$ is a Latin square graph, denoted here by
\(LS_{q+1}(n)\).
It is strongly regular with parameters
\begin{equation}\label{parls}
(n^2,(n-1)(q+1),q(q-1)+n-2,q(q+1)),
\end{equation}
and spectrum
\begin{equation}\label{specls}
\Spec(LS_{q+1}(n))=
\{[(n-1)(q+1)]^1,[n-q-1]^{(n-1)(q+1)},[-q-1]^{(n-1)(n-q)}\}.
\end{equation}

Translating \cite[Theorem~1.4]{BKOT2019} under the standard
correspondence between orthogonal arrays and transversal designs, and
then passing to dual nets, gives the following form.

\begin{pro}\label{prop oa}
	Assume that there exists a net $\mathcal{N}(q+1,q)$.
	Then there exists an integer $N=N(q)$ such that, for every $n\ge N$,
	there exists a net $\mathcal{N}(q+1,n)$ containing $\mathcal{N}(q+1,q)$
	as a subspace.
\end{pro}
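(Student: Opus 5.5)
The plan is to derive Proposition~\ref{prop oa} from the cited embedding theorem for orthogonal arrays, \cite[Theorem~1.4]{BKOT2019}, by translating back and forth through the standard dictionary between nets, transversal designs and orthogonal arrays. Our reading of that theorem is that, for fixed $k$ and $m$, every $\OA(k,m)$ can be completed to an $\OA(k,n)$ for all sufficiently large $n$. The completion should contain the given array as a subarray on a subset of $m$ symbols in each coordinate. This is a Lindner/Evans-type embedding statement made asymptotic. So the work is entirely in checking that ``subarray'' corresponds exactly to ``subspace'' for nets.

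First, I would fix the correspondences. A net $\mathcal N(q+1,n)$ has $n^2$ points and $q+1$ parallel classes, each consisting of $n$ lines of size $n$. Labelling the lines of the $c$-th parallel class by a symbol set $S_c$ of size $n$, each point $p$ gives a row $(s_1(p),\dots,s_{q+1}(p))$, where $s_c(p)$ is the line of class $c$ through $p$. The net axioms (two points on at most one line, unique parallel through a point) say precisely that any two columns contain every ordered pair of symbols exactly once. Thus the points of the net are the rows of an $\OA(q+1,n)$, and this is the dual transversal design $TD(q+1,n)$. Conversely, an $\OA(q+1,n)$ defines a net whose points are the rows and whose lines are the sets of rows agreeing in a given column.

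Second, given $\mathcal N(q+1,q)$, form its $\OA(q+1,q)$ and apply \cite[Theorem~1.4]{BKOT2019}. This yields $N(q)$ such that, for every $n\ge N$, there is an $\OA(q+1,n)$ containing the small array as the set of rows whose entries all lie in chosen subsets $S'_c\subseteq S_c$ with $|S'_c|=q$. If the cited theorem is phrased for transversal designs, namely that a $TD(q+1,q)$ is a subdesign of a $TD(q+1,n)$ on a subset of $q$ points in each group, then the dictionary above turns it into this array statement.

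Third, I would verify the subspace condition. Let $P'$ be the set of these $q^2$ rows. For a line $L$ of the large net, given by column $c$ and symbol $s$, two cases arise. If $s\in S'_c$, then $L\cap P'$ is exactly the line of the small net with that label, which has $q$ points. If $s\notin S'_c$, then $L\cap P'=\varnothing$. Hence $\{L\cap P':|L\cap P'|\ge2\}$ is exactly the line set of $\mathcal N(q+1,q)$, as the definition of subspace requires. The main obstacle I expect is the exact form of the hypothesis and conclusion in \cite{BKOT2019}, for instance whether it embeds incomplete or partial arrays and whether the embedded copy sits on a common symbol subset. If the theorem only embeds a partial array, I would simply note that a complete $\OA(q+1,q)$ is a special case and that the embedding places it on an $S'_1\times\cdots\times S'_{q+1}$ subarray. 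Finally, I would remark that $\mathcal N(q+1,q)$ is exactly $\AG_q$.
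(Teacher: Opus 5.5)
Your proposal is correct and follows the paper's route. The paper also gets the statement just by invoking \cite[Theorem~1.4]{BKOT2019} and translating through the orthogonal array/transversal design/dual net correspondence. Your explicit check that a line with symbol in $S'_c$ meets $P'$ in a line of the small net, while every other line misses $P'$, supplies the subspace verification that the paper leaves implicit (if the cited result is a completion theorem for sparse partial arrays, add that the small array placed inside an order-$n$ array uses each row, column and symbol at most $q$ times, so it is sparse enough once $n$ is large).
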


The next lemma verifies that the twisting operation does not leave the
class of nets.  This is the only extra point needed in order to apply the
abstract construction to Latin square graphs.

\begin{lem}\label{lem:latin-negative-layer}
Let $\mathcal S$ be the partial linear space of a net $\mathcal{N}(q+1,n)$ containing an affine plane $\AG_q$ as a subspace.
For every $0\le j\le i\le \lfloor q/2\rfloor$, the point graph of the twisted negative layer $\mathcal S^-_{i,j}$ is again a Latin square graph  $LS_{q+1}(n)$.
\end{lem}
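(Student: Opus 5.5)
The plan is to show that $\mathcal S^-_{i,j}$ is itself a net $\mathcal N(q+1,n)$. Its point graph is then, by definition, a Latin square graph $LS_{q+1}(n)$. By Proposition~\ref{prop:twisted-pls}, $\mathcal S^-_{i,j}$ is already a $(q,n-1)$-regular partial linear space containing $\Pi\cong\AG_q$ as a subspace. So only the parallel axiom needs checking: for every point $p$ and line $M$ with $p\notin M$, there must be a unique line through $p$ disjoint from $M$.

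I would reduce this to a statement about parallel classes. In a net, disjointness is an equivalence relation on lines with $q+1$ classes, each of which partitions $P$ into $n$ lines. Conversely, a $(q,n-1)$-regular partial linear space on $n^2$ points whose lines split into $q+1$ classes, each partitioning the point set, satisfies the parallel axiom. Indeed, the $q+1$ lines through $p$ lie in distinct classes, because lines in one class are disjoint. Each line meeting $M$ does so in one point, and $M$ has $n$ points, so exactly $n$ lines through points of $M$ are not in the class of $M$. Counting this way shows that exactly $q$ of the lines through $p$ meet $M$, so exactly one line through $p$ misses $M$, namely the one in the class of $M$.

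The key step is therefore to show that the twisted line set keeps the original parallel classes, indexed by line labels. Let $\mathcal C$ be a parallel class of $\mathcal S$. I claim that $\{L^-:L\in\mathcal C\}$ is still a partition of $P^-$. The outside parts $L^-\setminus P_\Pi^-$ are unchanged by Lemma~\ref{lem:basic-structure}(ii), so the class still partitions $P\setminus P_\Pi$. Inside $P_\Pi$, the lines of $\mathcal C$ with non-empty trace restrict to a parallel class of $\AG_q$. Here I use that in a net two disjoint lines meeting $P_\Pi$ have disjoint traces, and that $q$ pairwise disjoint traces of size $q$ cover $P_\Pi$. A twist exchanges the traces $\ell_{a,b}$ and $\ell_{\widetilde a,b}$, which are parallel in $\Pi$ by the choice of labels. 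I must also check that $L_{a,b}$ and $L_{\widetilde a,b}$ lie in the same class of $\mathcal S$. This holds because their traces are disjoint, and intersection sizes are governed by Lemma~\ref{lem:trace-lift}; more directly, Lemma~\ref{lem:basic-structure}(ii) says $|L_1^-\cap L_2^-|=|L_1\cap L_2|$. So the traces within the class are only permuted, and the class still partitions $P_\Pi$.

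In fact Lemma~\ref{lem:basic-structure}(ii) gives the whole result almost immediately. Disjointness of any two labels is preserved, so each class stays a set of $n$ pairwise disjoint lines of size $n$, which is a partition of the $n^2$ points. The main obstacle is therefore only the bookkeeping: making sure the counting argument applies with the exact parameters, namely $n^2$ points and $q+1$ classes of $n$ lines each. Once $\mathcal S^-_{i,j}$ is shown to be a net $\mathcal N(q+1,n)$, its point graph is $LS_{q+1}(n)$ with parameters \eqref{parls}.
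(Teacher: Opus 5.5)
Your proposal is correct and follows essentially the paper's route: the twists only permute traces within a parallel class, so the $q+1$ classes remain partitions of $P^-$ and the parallel axiom survives. Your use of Lemma~\ref{lem:basic-structure}(ii) and the counting argument spell out what the paper's short proof leaves implicit.

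Two minor slips, neither affecting your final argument:
\begin{enumerate}
\item Disjointness of the traces does not by itself give $L_{a,b}\cap L_{\widetilde a,b}=\varnothing$, since the two lines could a priori meet outside $P_\Pi$. The claim does follow from your own remark that a class of $\mathcal S$ induces a full parallel class of $\Pi$, together with uniqueness of lifts.
\item The counting sentence should say the following: for each class other than that of $M$, the $n$ points of $M$ lie on $n$ distinct lines of that class, and hence on all of them.
\end{enumerate}
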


\begin{proof}
Inside the affine subplane, the chosen swappable pairs are pairs of parallel affine lines, hence their lifts lie in the same parallel class.
A twist only exchanges the traces inside $P_\Pi$ of two lines in the same parallel class.
Therefore, for every point \(p\) and every line \(L\) with \(p\notin L\),
there is still a unique line through \(p\) disjoint from \(L\) after the twist.
Thus the twisted incidence structure is again a net $\mathcal{N}(q+1,n)$ of order $n$ and degree $q+1$.
Its point graph is therefore a Latin square graph with the parameters in \eqref{parls}.
\end{proof}

\begin{thm}\label{thm:latin-application}
Fix an integer \(q\ge2\) for which there exists an affine plane of order \(q\).  There exists an integer $N=N(q)$ such that, for every $n\ge N$, there is a graph $TLS(q,n)$ with the following properties:
\begin{enumerate}[label=\textup{(\roman*)}]
\item $TLS(q,n)$ is cospectral with the $2$-clique extension of $LS_{q+1}(n)$.
Its spectrum is
\[
\{[2(q+1)(n-1)+1]^1,
[2n-2q-1]^{(n-1)(q+1)},
[-2q-1]^{(n-1)(n-q)},
[-1]^{n^2}\}.
\]
In particular, it has exactly four distinct eigenvalues and smallest eigenvalue $-2q-1$.
\item $TLS(q,n)$ is co-edge-regular with parameter $\mu=2q(q+1)$.
\item
$
\Lambda(TLS(q,n))\ge q+2$ and $
\operatorname{cr}(TLS(q,n))\ge q+4.$
\end{enumerate}
\end{thm}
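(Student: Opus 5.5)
The plan is to take $TLS(q,n)=\Gamma_{\lfloor q/2\rfloor,\lfloor q/2\rfloor}(q,\beta)$ with $\beta=n-1$. The underlying $\mathcal S$ is a net $\mathcal N(q+1,n)$ containing $\AG_q=\mathcal N(q+1,q)$ as a subspace, which Proposition~\ref{prop oa} provides for $n\ge N_0(q)$. We enlarge the threshold to $N(q)=\max\{N_0(q),\,q^2+(q+1)^2+1\}$. This guarantees $\beta=n-1\ge |P_\Pi|+(q+1)^2$ with $|P_\Pi|=q^2$, so the hypothesis of Corollary~\ref{cor:tpl-level} and Theorem~\ref{thm:tpl-lambda} holds. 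Note that a net $\mathcal N(q+1,n)$ is $(q,n-1)$-regular, so the setting of Section~\ref{sec:twisted} applies with $\alpha=q-1$.

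For (i), Theorem~\ref{thm:tpl-cospectral} gives cospectrality with $G^{[2]}$, where $G=LS_{q+1}(n)$. We then apply \eqref{cliext} with $t=2$ to the spectrum \eqref{specls}. The eigenvalue $\theta$ maps to $2\theta+1$, which gives $2(q+1)(n-1)+1$, $2n-2q-1$ and $-2q-1$, together with $[-1]^{n^2}$. We check that these four values are distinct for $n\ge N$. The only possible coincidence is $2n-2q-1=-1$, i.e.\ $n=q$, which is excluded. The multiplicity $(n-1)(n-q)$ of $-2q-1$ is positive for $n>q$, so $-2q-1$ really occurs and is the smallest eigenvalue.

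For (ii), Lemma~\ref{lem:latin-negative-layer} shows that the point graphs $G$ and $G^-$ are both Latin square graphs $LS_{q+1}(n)$. These are strongly regular of diameter $2$ with $\mu=q(q+1)>0$ by \eqref{parls}, so in particular they are sesqui-regular. Corollary~\ref{cor:sesqui} then gives sesqui-regularity of $\Gamma$ with parameter $2q(q+1)$ and diameter at most $2$. Regularity comes from Lemma~\ref{lem:regular-degree}. The graph is not complete, since its eigenvalues are not those of a complete graph, so the diameter is exactly $2$ and the graph is co-edge-regular.

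For (iii), $G$ is strongly regular, so $\gamma=\Lambda(G)=1$, and Theorem~\ref{thm:tpl-lambda} yields $\Lambda\ge q+2$. The graph is non-complete, so Lemma~\ref{lem:coherent-rank-lambda-level} gives $\operatorname{cr}\ge q+4$. The main point requiring care is the bookkeeping in the first step: the threshold $N(q)$ must simultaneously ensure existence of the embedding, the size condition $\beta\ge|P_\Pi|+(q+1)^2$, and $n>q$ for four distinct eigenvalues. Everything else is a direct invocation of the earlier results.
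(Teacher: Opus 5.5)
Your proposal is correct and follows the paper's proof essentially step for step. You use the same graph $\Gamma_{\lfloor q/2\rfloor,\lfloor q/2\rfloor}(q,n-1)$ built over a net from Proposition~\ref{prop oa}, cospectrality via Theorem~\ref{thm:tpl-cospectral} and \eqref{cliext}, co-edge-regularity via Lemma~\ref{lem:latin-negative-layer} and Corollary~\ref{cor:sesqui}, and the bounds via Theorem~\ref{thm:tpl-lambda} and Lemma~\ref{lem:coherent-rank-lambda-level}; your explicit threshold $N(q)=\max\{N_0(q),\,q^2+(q+1)^2+1\}$, the check that $n=q$ is the only possible eigenvalue coincidence, and the non-completeness argument for diameter exactly $2$ just make precise what the paper leaves as ``for sufficiently large $n$''.
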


\begin{proof}
By Proposition~\ref{prop oa}, after increasing
\(N\) if necessary, there exists a net \(\mathcal N(q+1,n)\) containing
\(\AG_q\) as a subspace.  Let \(\mathcal S\) be this partial linear space,
and let \(G=LS_{q+1}(n)\) be its point graph.
Here $\beta=n-1$ and $\Pi\cong\AG_q$.
Increase the lower bound on $n$, if necessary, so that
$\beta=n-1\ge |P_\Pi|+(q+1)^2.$
Define
\[
TLS(q,n):=\Gamma_{\lfloor q/2\rfloor,\lfloor q/2\rfloor}(q,n-1).
\]

The number of vertices and the degree follow from Definition~\ref{def:tpl} and Lemma~\ref{lem:regular-degree}.
By Theorem~\ref{thm:tpl-cospectral}, $TLS(q,n)$ is cospectral with $G^{[2]}$.
Applying \eqref{cliext} to the spectrum \eqref{specls} gives the displayed spectrum.
For sufficiently large $n$, the four displayed eigenvalues are distinct, and the smallest one is $-2q-1$.

The graph $G$ is strongly regular with non-adjacent common-neighbour parameter $q(q+1)$, and by Lemma~\ref{lem:latin-negative-layer} the point graph of the negative layer has the same parameter.
Corollary~\ref{cor:sesqui} therefore gives co-edge-regularity with parameter $2q(q+1)$.
Finally, since $G$ is strongly regular, $\Lambda(G)=1$.
Theorem~\ref{thm:tpl-lambda} gives
\[
\Lambda(TLS(q,n))\ge q+2.
\]
The coherent-rank bound follows from Lemma~\ref{lem:coherent-rank-lambda-level}.
\end{proof}

\subsection{Regular semiplanes and twisted Steiner graphs}\label{st}

A \emph{regular semiplane} $\mathcal{D}(q+1,n)$ is a $(q,\frac{n-1}{q}-1)$-regular partial linear space in which any two
distinct lines meet in exactly one point.
Equivalently, such a regular semiplane is the dual incidence structure
associated with a Steiner system \(S(2,q+1,n)\); see, for example,
\cite[Section~2]{NationSeffrood2011} and \cite[Chapter~I]{BJL1999}.
In particular, a regular semiplane  $\mathcal{D}(q+1,q^2+q+1)$ is a projective plane $\PG_q$ of order $q$. 
The point graph of such a regular semiplane $\mathcal{D}(q+1,n)$ is a Steiner graph, denoted
here by \(S_{q+1}(n)\).
It is strongly regular with parameters
\begin{equation}\label{parst}
 v=\frac{n(n-1)}{q(q+1)},\quad
 k=\frac{(q+1)(n-q-1)}{q},\quad
 \lambda=q^2+\frac{n-1}{q}-2,
 \quad
 \mu=(q+1)^2,
\end{equation}
and spectrum
\begin{equation}\label{specst}
\Spec(S_{q+1}(n))=
\left\{\left[\frac{(q+1)(n-q-1)}{q}\right]^1,
\left[\frac{n-(q+1)^2}{q}\right]^{n-1},
[-q-1]^{v-n}\right\}.
\end{equation}

The congruence conditions in the next result are the usual admissibility
conditions for Steiner systems with block size \(q+1\).  The embedding
theorem of \cite[Theorem~1.6]{DLEL2016}, after passing to dual incidence
structures, gives exactly the form needed here.

\begin{pro}\label{prop:semiplane-containing-projective-plane}
	Assume that there exists a projective plane \(\PG_q\) of order \(q\).
	Then there exists an integer \(N=N(q)\) such that, for every \(n\ge N\)
	satisfying
	\[
	n-1\equiv0\pmod q,
	\qquad
	n(n-1)\equiv0\pmod{q(q+1)},
	\]
	there exists a regular semiplane \(\mathcal D(q+1,n)\) containing
	\(\PG_q\) as a subspace.
\end{pro}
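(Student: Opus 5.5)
The plan is to dualize. A regular semiplane $\mathcal D(q+1,n)$ is the dual of a Steiner system $S(2,q+1,n)$, so I will build a suitable Steiner system that contains a projective plane as a subdesign and then pass to the dual. Let $\Pi_0$ be a projective plane of order $q$, viewed as an $S(2,q+1,q^2+q+1)$ on a point set $X$ with block set $\mathcal B_0$. I will invoke the embedding theorem \cite[Theorem~1.6]{DLEL2016}: for all sufficiently large $n$ satisfying the admissibility conditions $n-1\equiv0\pmod q$ and $n(n-1)\equiv0\pmod{q(q+1)}$, there is an $S(2,q+1,n)$, say $(V,\mathcal B)$, with $X\subseteq V$ and $\mathcal B_0\subseteq\mathcal B$. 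Here $N(q)$ is the bound supplied by that theorem. If the theorem is formulated for partial Steiner systems, I apply it to the complete design $\Pi_0$, which is trivially such a system.

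Next I define the dual structure $\mathcal D$. Its points are the blocks in $\mathcal B$, and its lines are the sets $\mathcal B_v=\{B\in\mathcal B: v\in B\}$ for $v\in V$. I then check the defining properties of a regular semiplane.
\begin{enumerate}[label=\textup{(\alph*)}]
\item Each block has $q+1$ points, so each point of $\mathcal D$ lies on $q+1$ lines.
\item Each point of $V$ lies on $r=(n-1)/q$ blocks, so each line of $\mathcal D$ has $r=\bigl(\tfrac{n-1}{q}-1\bigr)+1$ points.
\item Two distinct blocks meet in at most one point, so $\mathcal D$ is a partial linear space.
\item Two distinct points of $V$ lie in exactly one common block, so any two lines of $\mathcal D$ meet in exactly one point.
\end{enumerate}
Together these show that $\mathcal D$ is a $\bigl(q,\tfrac{n-1}{q}-1\bigr)$-regular semiplane, that is, a $\mathcal D(q+1,n)$.

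The main check is that $P'=\mathcal B_0$ spans a subspace of $\mathcal D$ isomorphic to a projective plane of order $q$. By the definition of subspace, I must determine which lines $\mathcal B_v$ satisfy $|\mathcal B_v\cap\mathcal B_0|\ge2$, and identify the resulting traces.

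If $v\in X$, then $\mathcal B_v\cap\mathcal B_0$ is the set of the $q+1$ blocks of $\Pi_0$ through $v$. This is a line of the dual plane $\Pi_0^{*}$.

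If $v\notin X$, then $v$ lies in at most one block of $\mathcal B_0$. Indeed, any two blocks of $\mathcal B_0$ already meet in a point of $X$, since $\Pi_0$ is a projective plane, and two blocks of $\mathcal B$ share at most one point. So these lines contribute no traces of size at least two.

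Hence the subspace on $P'$ is exactly $\Pi_0^{*}$. The dual of a projective plane of order $q$ is again a projective plane of order $q$, and the statement only requires an abstract $\PG_q$, not a Desarguesian one. This proves the proposition.

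The step I expect to be the main obstacle is matching the precise hypotheses and conclusion of \cite[Theorem~1.6]{DLEL2016} to what is needed here. Specifically, I need the embedding to keep $\mathcal B_0$ as blocks of the large design without adding new blocks inside $X$. This is automatic, because $\Pi_0$ already covers every pair of points of $X$. I also need the set of admissible $n$ in that theorem to coincide, for large $n$, with the two stated congruences. Once that is settled, everything else is the routine duality bookkeeping above.
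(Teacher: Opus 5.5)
Your proposal is correct and takes the same route as the paper. You view $\PG_q$ as an $S(2,q+1,q^2+q+1)$, embed it via \cite[Theorem~1.6]{DLEL2016} into an $S(2,q+1,n)$ for all sufficiently large admissible $n$, and pass to the dual incidence structure. The paper does this in one line; your checks of the regularity parameters, the semiplane axiom, and the identification of the subspace on $\mathcal B_0$ with the dual plane $\Pi_0^{*}$ just write out that dualization step.
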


We similarly need to check that twisting preserves the class of regular
semiplanes.
\begin{lem}\label{lem:steiner-negative-layer}
	Let $\mathcal S$ be the partial linear space of a regular semiplane $\mathcal{D}(q+1,n)$ containing a projective plane $\PG_q$ as a subspace.
	For every $0\le j\le i\le \lfloor q/2\rfloor$, the point graph of the twisted negative layer $\mathcal S^-_{i,j}$ is again a Steiner graph  $S_{q+1}(n)$.
\end{lem}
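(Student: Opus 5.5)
We will show that $\mathcal S^-_{i,j}$ is itself a regular semiplane $\mathcal D(q+1,n)$. The statement then follows at once, because the point graph of any regular semiplane $\mathcal D(q+1,n)$ is by definition a Steiner graph $S_{q+1}(n)$, with the parameters in \eqref{parst}.

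The first step is to invoke Proposition~\ref{prop:twisted-pls}. It says that $\mathcal S^-_{i,j}$ is a $(q,\beta)$-regular partial linear space with $\beta=\frac{n-1}{q}-1$, and that it contains $\Pi\cong\PG_q$ as a subspace. Regularity also fixes the number of points, since the point set $P^-$ is a copy of $P$. It therefore remains only to check the defining axiom: any two distinct lines of $\mathcal S^-_{i,j}$ meet in exactly one point.

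For that axiom we use Lemma~\ref{lem:basic-structure}(ii). For any two line labels $L_1,L_2\in\mathcal L$ it gives $|L_1^-\cap L_2^-|=|L_1\cap L_2|$. In $\mathcal S$ the right-hand side equals $1$ for distinct labels, because $\mathcal S$ is a regular semiplane. Hence any two distinct lines of the negative layer also meet in exactly one point.

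One small point needs care: distinct labels must give distinct lines in the negative layer. This holds because the twists leave the parts of lines outside $P_\Pi$ unchanged and only permute traces among labels. Suppose two labels had the same realization $L_1^-=L_2^-$. Then $|L_1^-\cap L_2^-|=\beta+1\ge2$, which contradicts the intersection count above. So the lines of $\mathcal S^-_{i,j}$ are in bijection with $\mathcal L$.

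If one wants this intersection count made explicit rather than cited, I would rerun the projective-case argument from the proof of Lemma~\ref{lem:basic-structure}(ii). Twisting exchanges only the traces on $P_\Pi$. Any two non-empty traces are lines of $\PG_q$, so they meet in exactly one point both before and after the twist, and empty traces stay empty. The outside parts, and hence their intersections, are unchanged.

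The main obstacle is mild. We must make sure that a line $L$ with a non-empty trace and a line $M$ with an empty trace still meet in exactly one point, and that this point lies outside $P_\Pi$. This holds because $M^-\cap P_\Pi^-=\varnothing$ and the outside parts are unchanged, so all the bookkeeping is already contained in Lemma~\ref{lem:basic-structure}(ii).
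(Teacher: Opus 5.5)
Your proposal is correct and follows essentially the same route as the paper. Proposition~\ref{prop:twisted-pls} gives regularity and partial linearity; the remaining axiom, that any two lines meet, holds because outside intersections are unchanged by the twists while non-empty traces remain lines of $\PG_q$ and therefore still meet. Citing Lemma~\ref{lem:basic-structure}(ii) for $|L_1^-\cap L_2^-|=|L_1\cap L_2|=1$ packages exactly the paper's case split, and your extra check that distinct labels stay distinct lines is a harmless addition.
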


\begin{proof}
	By Proposition~\ref{prop:twisted-pls}, the twisted layer is a partial
	linear space, so any two distinct lines meet in at most one point.  It
	remains to show that they always meet.
	
	Consider two line labels.  If at least one of them has empty trace on the
	projective subplane, then their unique intersection before the twist lies
	outside \(P_\Pi\), and this outside intersection is unchanged by the twist.
	If both labels have non-empty traces, then after the twist their traces are
	still lines of the projective plane, and hence meet in exactly one point.
	Thus any two distinct lines of \(\mathcal S^-_{i,j}\) meet in exactly one
	point.
	
	Therefore \(\mathcal S^-_{i,j}\) is again a regular semiplane
	\(\mathcal D(q+1,n)\), and its point graph is again a Steiner graph
	\(S_{q+1}(n)\).
\end{proof}

\begin{thm}\label{thm:steiner-application}
Fix an integer \(q\ge2\) for which there exists a projective plane of order \(q\). There exists an integer $N=N(q)$ such that, for every $n\ge N$ satisfying $n-1\equiv0\pmod q$ and $n(n-1)\equiv0\pmod{q(q+1)}$, there is a graph $TS(q,n)$ with the following properties.
Let
$v=\frac{n(n-1)}{q(q+1)}.$
Then:
\begin{enumerate}[label=\textup{(\roman*)}]
\item $TS(q,n)$ is cospectral with the $2$-clique extension of $S_{q+1}(n)$.
Its spectrum is
\[
\left\{
\left[2\frac{(q+1)(n-q-1)}{q}+1\right]^1,
\left[2\frac{n-(q+1)^2}{q}+1\right]^{n-1},
[-2q-1]^{v-n},
[-1]^v
\right\}.
\]
In particular, it has exactly four distinct eigenvalues and smallest eigenvalue $-2q-1$.
\item $TS(q,n)$ is co-edge-regular with parameter $\mu=2(q+1)^2$.
\item
$
\Lambda(TS(q,n))\ge q+2$ and $
\operatorname{cr}(TS(q,n))\ge q+4.$
\end{enumerate}
\end{thm}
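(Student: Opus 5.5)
The proof will follow that of Theorem~\ref{thm:latin-application}, with the net replaced by a regular semiplane. By Proposition~\ref{prop:semiplane-containing-projective-plane}, there is an $N_0(q)$ such that for every admissible $n\ge N_0$ there exists a regular semiplane $\mathcal D(q+1,n)$ containing $\PG_q$ as a subspace. Let $\mathcal S$ be this partial linear space and $G=S_{q+1}(n)$ its point graph. Here $\Pi\cong\PG_q$, $\alpha=q$, $|P_\Pi|=q^2+q+1$ and $\beta=\frac{n-1}{q}-1$. Since $\beta\to\infty$ as $n\to\infty$, we increase $N$ so that $\beta\ge q^2+q+1+(q+1)^2$, which is the hypothesis of Corollary~\ref{cor:tpl-level}. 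We then set
\[
TS(q,n):=\Gamma_{\lfloor q/2\rfloor,\lfloor q/2\rfloor}\Bigl(q,\tfrac{n-1}{q}-1\Bigr).
\]

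For (i), Theorem~\ref{thm:tpl-cospectral} shows that $TS(q,n)$ is cospectral with $G^{[2]}$. Substituting \eqref{specst} into \eqref{cliext} with $t=2$ gives the displayed spectrum: the eigenvalue $\theta$ becomes $2\theta+1$, and $-1$ appears with multiplicity $v$. The four values are distinct once $n$ is large. Indeed, $2\frac{n-(q+1)^2}{q}+1$ exceeds $-1$ as soon as $n>(q+1)^2$, and it is strictly smaller than the degree. Also $v-n>0$ for large $n$, so $-2q-1$ really occurs and is the smallest eigenvalue. Lemma~\ref{regular} together with Lemma~\ref{lem:regular-degree} confirms regularity, although regularity is already given directly by Lemma~\ref{lem:regular-degree}.

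For (ii), $G$ is strongly regular with $\mu=(q+1)^2>0$. By Lemma~\ref{lem:steiner-negative-layer}, the point graph $G^-$ of the twisted negative layer is again $S_{q+1}(n)$ with the same $\mu$. Both graphs are non-complete strongly regular graphs, so both have diameter $2$. Corollary~\ref{cor:sesqui} then shows that $TS(q,n)$ is sesqui-regular with parameter $2(q+1)^2$ and diameter at most $2$. The graph is not complete, because it has two non-adjacent vertices $x^+,y^+$ whenever $x\nsim y$ in $G$. Hence its diameter is exactly $2$, and it is co-edge-regular.

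For (iii), we have $\Lambda(G)=1$ because $G$ is strongly regular. Theorem~\ref{thm:tpl-lambda} gives $\Lambda(TS(q,n))\ge q+2$, and Lemma~\ref{lem:coherent-rank-lambda-level} gives $\operatorname{cr}\ge q+4$. The only step that needs real care is the bookkeeping of the lower bound on $n$. We must check that the size condition on $\beta$, the existence threshold from Proposition~\ref{prop:semiplane-containing-projective-plane}, and the distinctness and positive multiplicity of the four eigenvalues can all be met together by a single $N(q)$. Each of these is a lower bound on $n$ alone, so taking their maximum suffices, and no compatibility issue with the congruence conditions arises.
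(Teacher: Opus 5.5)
Your proposal is correct and follows the paper's proof essentially step for step. It uses the same semiplane from Proposition~\ref{prop:semiplane-containing-projective-plane} and the same graph $\Gamma_{\lfloor q/2\rfloor,\lfloor q/2\rfloor}$, obtains cospectrality from Theorem~\ref{thm:tpl-cospectral}, co-edge-regularity from Lemma~\ref{lem:steiner-negative-layer} and Corollary~\ref{cor:sesqui}, and the level and rank bounds from Theorem~\ref{thm:tpl-lambda} and Lemma~\ref{lem:coherent-rank-lambda-level}; your extra checks (distinctness of the four eigenvalues, and non-completeness so that the diameter is exactly two) only make explicit what the paper leaves to ``sufficiently large $n$''.
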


\begin{proof}
By Proposition~\ref{prop:semiplane-containing-projective-plane}, after
increasing \(N\) if necessary, there exists a regular semiplane
\(\mathcal D(q+1,n)\) containing \(\PG_q\) as a subspace.  Let
\(\mathcal S\) be this partial linear space, and let \(G=S_{q+1}(n)\)
be its point graph.
Here
$\beta=\frac{n-1}{q}-1$ and $
\Pi\cong\PG_q.$
Increase the lower bound on $n$, if necessary, so that
$\beta\ge |P_\Pi|+(q+1)^2.$
Define
\[
TS(q,n):=\Gamma_{\lfloor q/2\rfloor,\lfloor q/2\rfloor}\left(q,\frac{n-1}{q}-1\right).
\]

The number of vertices and the degree follow from the construction and Lemma~\ref{lem:regular-degree}.
By Theorem~\ref{thm:tpl-cospectral}, $TS(q,n)$ is cospectral with $G^{[2]}$.
Applying \eqref{cliext} to the spectrum \eqref{specst} gives the displayed spectrum.
For sufficiently large $n$, the four eigenvalues are distinct, and the smallest is $-2q-1$.

The graph $G$ is strongly regular with non-adjacent common-neighbour parameter $(q+1)^2$, and by Lemma~\ref{lem:steiner-negative-layer} the negative layer has the same sesqui-regular parameter.
Corollary~\ref{cor:sesqui} gives co-edge-regularity with parameter $2(q+1)^2$.
Finally, $\Lambda(G)=1$, so Theorem~\ref{thm:tpl-lambda} gives $\Lambda(TS(q,n))\ge q+2$, and Lemma~\ref{lem:coherent-rank-lambda-level} gives the stated coherent-rank bound.
\end{proof}

\begin{proof}[Proof of Theorem~\ref{thm:intro-main}]
	If an affine plane of order \(q\) exists, then Proposition~\ref{prop oa}
	provides, for all sufficiently large \(n\), nets \(\mathcal N(q+1,n)\)
	containing this affine plane as a subspace. Applying
	Theorem~\ref{thm:latin-application} gives infinitely many graphs with the
	claimed properties.
	
	If a projective plane of order \(q\) exists, then
	Proposition~\ref{prop:semiplane-containing-projective-plane} provides, for
	all sufficiently large admissible \(n\), regular semiplanes
	\(\mathcal D(q+1,n)\) containing this projective plane as a subspace.
	Applying Theorem~\ref{thm:steiner-application} gives infinitely many graphs
	with the claimed properties. The admissible values of \(n\) are infinite,
	since every sufficiently large \(n\equiv1\pmod{q(q+1)}\) is admissible.
\end{proof}

\section{Concluding remarks}\label{sec:conclusion}

Strongly regular graphs have coherent rank three, so in the
three-eigenvalue regular setting spectral simplicity and coherent rank coincide in the strongest possible way. 
 The construction above shows that this coincidence already breaks down for regular graphs with four distinct eigenvalues.
  For every integer \(q\ge2\) admitting a finite affine or projective plane of order \(q\), 
  the construction gives twisted Latin square or twisted Steiner graph families that are co-edge-regular, have exactly four distinct eigenvalues, smallest eigenvalue \(-2q-1\), and  coherent rank at least \(q+4\). 
Thus, coherent rank is unbounded within the class of
co-edge-regular graphs with four distinct eigenvalues.

We also point out the relation with the twisted Latin square graphs of
Ge and Koolen~\cite{GeKoolen2025}.  Their construction is global and
field-based: it uses a group-divisible orthogonal array together with a
distinguished system of affine planes in \(\mathbb F_q^3\), and hence
assumes that \(q\) is a prime power.  From the viewpoint of the present
paper, their construction can be viewed as a simultaneous version of the
same trace-exchange operation.  The underlying Latin-square net contains a
highly organized family of affine subplanes of order \(q\), and the
twisting is performed across this whole family at once.

There is also a methodological difference.  In~\cite{GeKoolen2025}, the
four-eigenvalue property is obtained by verifying structural conditions
from a combinatorial characterization of co-edge-regular graphs with at
most four distinct eigenvalues.  In the present paper, the spectral
argument is separated from the structural one: each elementary trace
exchange is a Wang--Qiu--Hu switching, so the spectrum is preserved
directly.  Consequently, the four-eigenvalue property follows from
cospectrality with the untwisted clique extension, while the coherent-rank
lower bound comes from the many adjacent-pair common-neighbour counts
created by the twists.

The present construction therefore isolates the local mechanism behind the construction of~\cite{GeKoolen2025}.  It does not use the
ambient vector space \(\mathbb F_q^3\), but only requires the underlying
partial linear space to contain a prescribed finite plane as a subspace.
Thus the prime-power assumption on \(q\) is not intrinsic to the twisting
argument itself; it enters only through the existence of the required affine or projective subplane.  
Moreover, the graphs constructed in~\cite{GeKoolen2025} have \(\lambda\)-level \(3\), which gives only the lower bound \(5\) on their coherent rank. 
By contrast, our twists are arranged to produce
coherent rank at least \(q+4\), and hence give a lower bound growing linearly
with \(q\).

Finally, the semiplane construction gives an affirmative answer to the
Steiner-graph analogue raised in~\cite[Problem~34(2)]{GeKoolen2025}.
Indeed, applying the same trace-exchange mechanism to regular semiplanes
containing projective subplanes yields the twisted Steiner graph families
constructed above.

The construction leaves several natural questions open.  For instance,
our lower bound on coherent rank comes only from adjacent-pair
common-neighbour counts, so it would be interesting to determine the exact
coherent ranks of these twisted graphs.  It is also natural to ask whether
one can obtain unbounded coherent rank in the four-eigenvalue
co-edge-regular setting while keeping the smallest eigenvalue fixed.

\section*{Acknowledgements}
 H.-J. Ge is supported by CSC scholarship program (No.  202506340038).
J.H. Koolen is partially supported by the National Key R. and D. Program of China (No. 2020YFA0713100), the National Natural Science Foundation of China 
(No. 12071454, No. 12471335), and the Anhui Initiative in Quantum Inform
ation Technologies (No. AHY150000).

\bibliographystyle{plain}
\bibliography{GVK2026}

@article{NationSeffrood2011,
  author  = {Nation, J. B. and Seffrood, J. Y. G.},
  title   = {Dual linear spaces generated by a non-{D}esarguesian configuration},
  journal = {Contrib. Discrete Math.},
  volume  = {6},
  number  = {1},
  pages   = {98--141},
  year    = {2011},
  doi     = {10.55016/ojs/cdm.v6i1.62078}
}

@article{GCK2021,
  author  = {Gebremichel, B. and Cao, M.-Y. and Koolen, J. H.},
  title   = {Two characterizations of the grid graphs},
  journal = {Discrete Math.},
  volume  = {344},
  number  = {11},
  pages   = {112550},
  year    = {2021},
  doi     = {10.1016/j.disc.2021.112550}
}

@book{BJL1999,
  author    = {Beth, T. and Jungnickel, D. and Lenz, H.},
  title     = {Design Theory},
  edition   = {Second},
  publisher = {Cambridge University Press},
  address   = {Cambridge},
  year      = {1999},
  volumes   = {2}
}

@article{BKOT2019,
  author  = {Barber, B. and K{\"u}hn, D. and Lo, A. and Osthus, D. and Taylor, A.},
  title   = {Clique decompositions of multipartite graphs and completion of {L}atin squares},
  journal = {J. Combin. Theory Ser. A},
  volume  = {151},
  pages   = {146--201},
  year    = {2017},
  doi     = {10.1016/j.jcta.2017.04.005}
}

@article{DLEL2016,
  author  = {Dukes, P. and Lamken, E. R. and Ling, A. C. H.},
  title   = {An existence theory for incomplete designs},
  journal = {Canad. Math. Bull.},
  volume  = {59},
  number  = {2},
  pages   = {287--302},
  year    = {2016},
  doi     = {10.4153/CMB-2015-073-7}
}

@misc{gy24,
  author       = {Greaves, G. and Yip, J.},
  title        = {The coherent rank of a graph with three eigenvalues},
  year         = {2024},
  howpublished = {arXiv:2406.17395}
}

@article{WQH2019,
  author  = {Wang, W. and Qiu, L. and Hu, Y.},
  title   = {Cospectral graphs, {GM}-switching and regular rational orthogonal matrices of level {$p$}},
  journal = {Linear Algebra Appl.},
  volume  = {563},
  pages   = {154--177},
  year    = {2019},
  doi     = {10.1016/j.laa.2018.10.027}
}

@article{GeKoolen2025,
  author  = {Ge, H.-J. and Koolen, J. H.},
  title   = {On co-edge-regular graphs with {4} distinct eigenvalues},
  journal = {Electron. J. Combin.},
  volume  = {32},
  number  = {3},
  pages   = {{\#}P3.55},
  year    = {2025},
  doi     = {10.37236/13958}
}

@article{van99,
  author  = {van Dam, E. R.},
  title   = {Three-class association schemes},
  journal = {J. Algebraic Combin.},
  volume  = {10},
  number  = {1},
  pages   = {69--107},
  year    = {1999},
  doi     = {10.1023/A:1018628204156}
}

@article{vs98,
  author  = {van Dam, E. R. and Spence, E.},
  title   = {Small regular graphs with four eigenvalues},
  journal = {Discrete Math.},
  volume  = {189},
  number  = {1--3},
  pages   = {233--257},
  year    = {1998},
  doi     = {10.1016/S0012-365X(98)00085-5}
}

@article{vdh2003,
  author  = {van Dam, E. R. and Haemers, W. H.},
  title   = {Which graphs are determined by their spectrum?},
  journal = {Linear Algebra Appl.},
  volume  = {373},
  pages   = {241--272},
  year    = {2003},
  doi     = {10.1016/S0024-3795(03)00483-X}
}

@book{god01,
  author    = {Godsil, C. D. and Royle, G. F.},
  title     = {Algebraic Graph Theory},
  series    = {Graduate Texts in Mathematics},
  volume    = {207},
  publisher = {Springer},
  address   = {New York},
  year      = {2001},
  doi       = {10.1007/978-1-4613-0163-9}
}

@misc{Ternotes,
  author       = {Terwilliger, P.},
  title        = {Lecture Notes on {T}erwilliger Algebra},
  note         = {Edited by H. Suzuki},
  year         = {2023},
  howpublished = {\url{https://icu-hsuzuki.github.io/t-algebra/t-algebra.pdf}}
}

@book{BrVM2022,
  author    = {Brouwer, A. E. and Van Maldeghem, H.},
  title     = {Strongly Regular Graphs},
  publisher = {Cambridge University Press},
  address   = {Cambridge},
  year      = {2022},
  doi       = {10.1017/9781009057226}
}

\end{document}